\documentclass [11 pt]{amsart}
\usepackage{graphicx} 
\usepackage{amsmath,amsfonts}
\usepackage{amssymb,amsbsy,mathrsfs,amscd}
\usepackage{color}
\usepackage{geometry}
\usepackage{mathtools}
\usepackage{amsfonts,bm}
\usepackage[all,cmtip]{xy}
\usepackage{pgfplots}
\usepackage{caption}
\usepackage{framed}
\newenvironment{mathframed}{\framed%
\allowdisplaybreaks
\vspace*{-\abovedisplayskip}\noindent}{%
\vspace*{-\dimexpr\baselineskip+\topsep}\endframed}

\usepackage{tikz-cd}
\usetikzlibrary{matrix}

\geometry{
    letterpaper,
    left=   1.2in,
    right=  1.2in,
    top=    1.25in,
    bottom= 1.25in
}

\allowdisplaybreaks[4]

\newcommand{\bu}{\bm u}

\newcommand{\bv}{\bm v}

\newcommand{\bbR}{\mathbb{R}}
\newcommand{\nab}{\nabla}

\newcommand{\Del}{\Delta}

\newcommand{\p}{\partial}

\newcommand{\bw}{\bm w}

\newcommand{\mct}{\mathcal{T}_N}

\newcommand{\bigzero}{\mbox{\normalfont\Large\bfseries 0}}
\newcommand\norm[1]{\left\lVert#1\right\rVert}
\numberwithin{equation}{section}

\newtheorem{theorem}{Theorem}[section]

\theoremstyle{definition}

\theoremstyle{remark}
\newtheorem{remark}[theorem]{Remark}

\newcommand{\bbN}{\mathbb{N}}

\begin{document}
\title[Null controllability FEM vs FDM]{Numerical Approximations for the Null Controllers of Structurally Damped Plate Dynamics}
\author[P. G. Geredeli, C. Givens and A. Zytoon]{Pelin G. Geredeli\email{\lowercase{peling@iastate.edu}}, Carson Givens and Ahmed Zytoon\email{\lowercase{zytoon@iastate.edu}}}
\address{Department of Mathematics, Iowa State University, Ames, IA 50011}

\maketitle

\begin{abstract}
In this paper, we consider a structurally damped elastic equation under hinged boundary conditions. Fully-discrete numerical approximation schemes are generated for the null controllability of these parabolic-like PDEs. We mainly use finite element method (FEM) and finite difference method (FDM) approximations to show that the null controllers being approximated via FEM and FDM exhibit exactly the same asymptotics of the associated minimal energy function. For this, we appeal to the theory originally given by R. Triggiani \cite{1} for construction of null controllers of ODE systems. These null controllers are also amenable to our numerical implementation in which we discuss the aspects of FEM and FDM numerical approximations and compare both methodologies. We justify our theoretical results with the numerical experiments given for both approximation schemes. 
\end{abstract}


\thispagestyle{empty}
\section{\textbf{Introduction}}

The partial differential equations (PDEs) of plate dynamics ubiquitously arise in elasticity to model and describe the oscillations of thin structures with large transverse displacements \cite{Lagnese}. Moreover, researchers of PDE control theory are often interested in devising control input methodologies by which one can elicit some pre-assigned behavior with respect to solutions of a given controlled plate or bean PDE system. In the course of constructing such a control theory for the given damped or undamped plate PDE, its underlying characteristics -hyperbolic or parabolic- must necessarily be taken into account \cite{lt}.

For example, whereas in hyperbolic equations, we have the notion of finite speed of propagation and evolution of singularities, the parabolic equations posses infinite speed of propagation and smoothing effect. In consequence, the notion of exact controllability-i.e., steering initial data to any finite energy state at some time (large enough) - is a reasonable object of study for hyperbolic problems. On the other hand, the null controllability problem- steering the initial data to the zero state at any time- makes sense for parabolic problems due to their smoothing effects.

In particular, there has been a great interest in studying the null controllability of infinite dimensional systems \cite{NCGAP, 13, 14, glo, 12, 1} with a view towards attaining optimal estimates for norms of minimal norm steering controls. In particular, null-controllability for deterministic parabolic-like PDE dynamics plays a crucial role in connection with corresponding stochastic parabolic differential equations. For example, it is known that the notion of null-controllability is equivalent to the strong Feller property of the semigroup of transition of the corresponding stochastic differential equation, which is obtained from the deterministic one by simply replacing the deterministic control with stochastic noise \cite{ek1, ek2, ek3}.

This manuscript considers certain PDE dynamics which exhibit analytic, or ‘‘parabolic-like’’ features. Since these dynamics are associated with an infinite speed of propagation (see \cite{LT}), it seems natural to ask: ``Is there any control function which steers the solution to the zero state after some certain time $T>0?$" This is the problem of “null controllability”. However, we must distinguish the ``null controllability'' concept between finite and infinite dimensional (PDE) systems since while the issue of finding asymptotics  for the associated minimal energy function defined in \eqref{minnorm} has completely been characterized in the finite dimensional ODE case \cite{24, 26}, the infinite dimensional PDE case is in general an open problem. \cite{24} provides a formula which describes the growth of the minimal norm control, as time $T\rightarrow 0$ for ODE dynamics. This result depends on the Kalman’s rank condition, which is the sufficient and necessary controllability condition in finite dimensions. In the case of interior boundary control, it was proved in \cite{1} there is a relation between the infinite dimensional asymptotics and finite dimensional truncations such that a priori bounds manifested by the approximating sequence of null controllers (for finite dimensional system) will lead to the conclusion of a null controller for the (infinite dimensional) analytic PDE systems under consideration. It was also shown in \cite{1} that infinite dimensional null controllers will capture the sharp asymptotics of the associated minimal energy function, which is defined through the means of minimal norm controls (see \eqref{minnorm}).

The numerical approximation of controlled PDEs has been a topic of longstanding interest \cite{glo} however in contrast to the growing literature on theoretical results obtained for the null controllability of parabolic-like plate equations, the knowledge about numerical approximation of the null controllability of PDE dynamics which exhibit analytic, or ‘‘parabolic-like’’ features is relatively limited. In \cite{NCGAP} semidiscrete finite element method (FEM) approximation scheme were presented for the null controllability of  non-standard parabolic PDE systems. The key feature in \cite{NCGAP} is that the approximating null controllers exhibit the asymptotics of the associated minimal energy function for the fully infinite dimensional system.

In this manuscript, our main goals are to derive fully-discrete Finite Element Method (FEM) and Finite Difference Method (FDM) numerical approximation schemes for a certain (nonstandard) analytic and parabolic-like PDE system, give numerical implementation, and compare the respective FEM and FDM approximations for this controlled structurally damped elastic equation. The main novelties of the current work are:\\

\textbf{(i) Fully discrete FEM Approximation:} The PDE model given in \eqref{MainNC} below was firstly studied in \cite{NCGAP}. It was proved that certain finite element method
(FEM) approximations $\{u_N^{*}\}$ and their limiting controller $\{u^{*}\}$ for the structurally damped PDE \eqref{MainNC} manifest the asymptotics (given in Theorem \ref{thm:Blowuprate}) of $\mathcal{E}_{min}(T)$ defined in \eqref{minnorm}. However, in this work no numerical implementation was provided for the derived FEM scheme. In the present work, unlike the semi-discrete approximations, we use ``fully-discrete"  FEM approximation and provide a numerical experiment to justify that the approximation of the null controllers, within FEM numerical scheme framework, obey the same blow up rate of $\mathcal{O}(T^{-3/2})$ given in Theorem \ref{thm:Blowuprate}. Moreover, we give an explicit formula for the approximate control functions. \\

\textbf{(ii) Fully discrete FDM Approximation:} We numerically analyze the null controllability problem for the given PDE \eqref{MainNC} below by means of the finite difference method approximation scheme. We see that Theorem \ref{thm:Discrete} can be employed to justify the use of finite difference method (FDM) approximations to numerically recover a solution to the said null controllability problem. In particular, we provide a theoretical proof for our main result Theorem \ref{thm:FDM} which essentially states that the approximating null controllers are uniformly bounded ``in $N$'' by the minimal energy assymptotics for the fully infinite dimensional controlled PDE system \eqref{MainNC}. Subsequently, using fully discrete FDM approximation scheme, we construct explicit control functions and give the numerical implementation.\\

\textbf{(iii) Comparison of FEM vs. FDM:} Since the numerical approximation of controlled PDEs is a topic of longstanding interest, a natural question arises: which numerical approximation method would give a better result to see that the infinite dimensional control $u^*,$ a control which inherits the finite dimensional asymptotics? Our numerical implementations for FEM and FDM approximations yield that while the finite difference method scheme (FDM) gives better results in approximating the control function at terminal time $T$, the finite element method scheme (FEM) is more stable in computing the control across different values of $T$.
\vspace{0.5cm}

\textbf{Plan of the Paper.} In Section 1, we introduce the PDE model under consideration and describe the mathematical setting to be used throughout the manuscript. We also recall the key theory given in \cite{1} to which we will appeal in proving our results. Since one of our main results is the numerical implementation of the finite element method approximation scheme, we will refer to the semi-discrete variational formulations generated within this framework in \cite{NCGAP}. We provide the reader the entire FEM scheme in Section 2. Then in Section 3, we consider the application of Theorem \ref{thm:Discrete} within the Finite Difference approximation scheme. For this, we prove Theorem \ref{thm:FDM} which guarantees the existence of null controllers for the finite difference method (FDM) approximating system. Section 4 is devoted to the numerical implementation of the both finite element method (FEM) and finite difference method (FDM) approximation schemes. We also give the algorithmic description of those schemes. In Section 5 and 6, we give our numerical experiments and conclusions, respectively. We mainly compare the two FEM and FDM numerical approximation schemes to understand which method is more stable and gives better results in approximating the null controllers of corresponding systems. In the last Section, we give a very clean and easy to follow recipe to construct a numerical test problem to the (homogeneous part) PDE \eqref{MainNC} below. For this, we appeal to algebraic theory to compute the matrix exponential that represents the solution to the PDE \eqref{MainNC}.
\vspace{0.5cm}

\noindent Throughout the paper the norms $||\cdot ||$ are taken to be $%
L^{2}(D)$ for the domain $D$, and the inner products in $L^{2}(D)$ is written $%
(\cdot ,\cdot )$. The space $H^{s}(D)$ will denote the Sobolev
space of order $s$, defined on a domain $D$, and $H_{0}^{s}(D)$ denotes the
closure of $C_{0}^{\infty }(D)$ in the $H^{s}(D)$ norm which we denote by $\Vert \cdot \Vert _{s,D}$. Also, $C$ will denote a generic positive constant. For any $T>0$, we recall the space 
\begin{align*}
L^2(0, T; L^2(\Omega)):=\{w:\Omega\times [0,T] \mapsto \bbR: w(.,t) \in L^2(\Omega), \forall t\in[0,T], \int_0^T\ {\|w(t)\|_{{\it L}^2(\Omega)}^2}\,dt<\infty \}.
\end{align*}

\noindent In what follows, $\Omega \subset \bbR^2$ will be a bounded polygonal domain with Lipschitz continuous boundary $\p\Omega = \Gamma$ and we consider the following controlled PDE system:
\begin{subequations}
\label{MainNC}
\begin{alignat}{2}
\label{MainNC1}
\omega_{tt} + \Delta^2 \omega -\rho\Delta\omega & = u,\quad &&\text{on }\Omega\times (0,T),\\
\label{MainNC2}
\omega = \Delta\omega & =0, \quad &&\text{on } \Gamma\times (0,T),\\
\label{MainNC3}
[\omega(0), \omega_t(0)] & = [\omega_0, \omega_1].
\end{alignat}
\end{subequations}
Here $\omega = \omega(x,t)$ is the elastic plate variable which satisfies the ``hinged boundary conditions", and the constant $ \rho>0~ (\rho \ne 2)$. The associated finite energy (Hilbert) space is given as
$$H=[H^2(\Omega) \cap H_0^1(\Omega)]\times L^2(\Omega).$$
We observe that the system \eqref{MainNC} can be rewritten as the ODE
\begin{equation}
\label{1stODE}
\frac{d}{dt}\begin{bmatrix}
\omega \\
\omega_t
\end{bmatrix}=\begin{bmatrix}
0 & I  \\
-A^2 & -\rho A 
\end{bmatrix}\begin{bmatrix}
\omega \\
\omega_t
\end{bmatrix}+\begin{bmatrix}
0 \\
u
\end{bmatrix},\quad 
\begin{bmatrix}
\omega(.,0) \\
\omega_t(.,0)
\end{bmatrix}\in H,
\end{equation}
where $A : D(A) \subset L^2(\Omega)\mapsto L^2$ is the (homogeneous) ``Dirichlet Laplacian''
\begin{equation}\label{DL}
Af = -\Delta f, \qquad D(A) = H^2(\Omega) \cap H_0^1(\Omega).
\end{equation}
Alternatively, the system \eqref{1stODE} will be equivalent, via the change of variables $$v = A\omega,~~ w = \omega_t,$$ to the following ODE:  
\begin{equation}
\label{2ndODE}
\frac{d}{dt}\begin{bmatrix}
v \\
w
\end{bmatrix}=\begin{bmatrix}
0 & A  \\
-A & -\rho A 
\end{bmatrix}\begin{bmatrix}
v \\
w
\end{bmatrix}+\begin{bmatrix}
0 \\
u
\end{bmatrix},\quad 
\begin{bmatrix}
v(0) \\
w(0)
\end{bmatrix} = \begin{bmatrix}
v_0 \\
w_0
\end{bmatrix}=\begin{bmatrix}
A\omega_0 \\
\omega_1
\end{bmatrix} \in L^2(\Omega) \times L^2(\Omega).
\end{equation}
An easy application of the Lumer-Phillips Theorem yields that there exists a unique solution $[v,w]\in L^2(\Omega) \times L^2(\Omega)$ to \eqref{2ndODE}, and subsequently $[A^{-1}v, w]=[\omega, \omega_t]$ in \eqref{1stODE} (or \eqref{MainNC}) have the regularity $ [\omega, \omega_t] \in C([0, T]; H).$ The dynamical system \eqref{2ndODE} was also shown to generate an analytic semigroup [9,10] which implies that the null controllability problem is the steering problem to be considered. In this regard, it was proved in \cite{1,12,13} that the following problem is solvable: \\
 
 \textbf{NC:} \textit{``Let terminal time $T>0$ be arbitrary. Given initial data $[\omega_0, \omega_1]\in H$, find $u\in L^2(0, T; L^2(\Omega))$ such that the corresponding solution $[\omega, \omega_t]$ of \eqref{MainNC} satisfies $$[\omega(T), \omega_t(T)] = [0,0].$$}
\noindent What is more, one can find the minimal norm control asymptotics relative to \eqref{2ndODE}. That is, --find $u_T^*(0, T; [\omega_0,\omega_1]) \in L^2(0, T; L^2(\Omega))$ such that $u_T^*$ solves the null controllability problem and minimizes the $L^2$-cost with respect to all possible null controllers-- Thus, the following ``minimal energy function'' is well defined:
\begin{equation}\label{minnorm}
\mathcal{E}_{min}(T) = \sup_{{\bf{x_0}}\in H, \| {\bf{x_0}} \|_H = 1} \|  u_T^*({\bf{x_0}}) \|_{L^2(0, T; L^2(\Omega))}.
\end{equation}
The reader is referred to the references \cite{1,14} for detailed information, however we will recall the following theorem that is related to the blow up rate of $\mathcal{E}_{min}(T)$.
\begin{theorem}\label{thm:Blowuprate}
(\cite{1,14}). The null controllability problem \textbf{(NC)} admits of a solution, and the associated minimal energy function $\mathcal{E}_{min}(T)$ given in \eqref{minnorm} obeys the blow up rate $\mathcal{O}(T^{-3/2})$. That is;
\begin{equation}\label{blup}
\mathcal{E}_{min}(T) = \sup_{{\bf{x_0}}\in H, \| {\bf{x_0}} \|_H = 1} \|  u_T^*({\bf{x_0}}) \|_{L^2(0, T; L^2(\Omega))}=\mathcal{O}(T^{-\frac{3}{2}}).
\end{equation}
\end{theorem}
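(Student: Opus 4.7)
The plan is to exploit the spectral decoupling of \eqref{2ndODE} in the eigenbasis of the Dirichlet Laplacian $A$, apply Triggiani's finite-dimensional Kalman-type construction \cite{1} mode-by-mode, and assemble the resulting controllers with bounds that are uniform in the spectral index. Let $\{e_k\}_{k\ge 1}$ diagonalize $A$ with eigenvalues $0<\lambda_1\le\lambda_2\le\cdots$. Expanding $v=\sum v_k e_k$, $w=\sum w_k e_k$, $u=\sum u_k e_k$ turns \eqref{2ndODE} into the decoupled family of planar systems
\begin{equation*}
\frac{d}{dt}\begin{bmatrix}v_k\\w_k\end{bmatrix}=\mathcal{A}_k\begin{bmatrix}v_k\\w_k\end{bmatrix}+Bu_k,\qquad
\mathcal{A}_k=\begin{bmatrix}0 & \lambda_k\\-\lambda_k & -\rho\lambda_k\end{bmatrix},\quad B=\begin{bmatrix}0\\1\end{bmatrix}.
\end{equation*}
The Kalman matrix $[B,\mathcal{A}_kB]$ has determinant $-\lambda_k\neq 0$, so each mode is controllable with minimal Kalman index $\kappa=2$, and the finite-dimensional asymptotics of \cite{24,26} already predict the mode-wise cost $\|u_k^{*}\|_{L^2}=\mathcal{O}(T^{-(2\kappa-1)/2})=\mathcal{O}(T^{-3/2})$.

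Next I would assemble the mode-wise minimal-$L^2$ controllers via the classical Gramian formula $u_k^{*}(t)=-B^{T}e^{\mathcal{A}_k^{T}(T-t)}W_k(T)^{-1}e^{\mathcal{A}_kT}[v_k(0),w_k(0)]^{T}$ with $W_k(T)=\int_0^{T}e^{\mathcal{A}_ks}BB^{T}e^{\mathcal{A}_k^{T}s}\,ds$, and derive a uniform-in-$k$ cost bound. The natural rescaling $\tau=\lambda_kt$ replaces $\mathcal{A}_k$ by $\lambda_k M$ where $M=\begin{bmatrix}0 & 1\\-1 & -\rho\end{bmatrix}$ is a fixed Hurwitz matrix (trace $-\rho<0$, determinant $1>0$), and yields $W_k(T)=\lambda_k^{-1}\widetilde W(\lambda_kT)$ with $\widetilde W(S):=\int_0^{S}e^{M\tau}BB^{T}e^{M^{T}\tau}\,d\tau$. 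A short-time Taylor expansion produces $\lambda_{\min}(\widetilde W(S))\asymp S^{3}$ as $S\to 0^{+}$, whence $\lambda_{\min}(W_k(T))\asymp\lambda_k^{2}T^{3}$ in the regime $\lambda_kT\le 1$; in the complementary regime $\lambda_kT\ge 1$, the Hurwitz decay $\|e^{M\tau}\|\le Ce^{-\alpha\tau}$ combined with the monotone lower bound $\widetilde W(S)\ge\widetilde W(1)>0$ yields the mode-wise estimate $\|u_k^{*}\|_{L^2}^{2}\le C\lambda_k e^{-2\alpha\lambda_kT}(|v_k(0)|^{2}+|w_k(0)|^{2})\le CT^{-1}(|v_k(0)|^{2}+|w_k(0)|^{2})$. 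The low-mode contribution ($\lambda_kT\le 1$) is then bounded by $C\lambda_1^{-2}T^{-3}\|[v_0,w_0]\|^{2}$ and dominates, giving $\|u\|_{L^2(0,T;L^2(\Omega))}^{2}\le CT^{-3}\|[\omega_0,\omega_1]\|_H^{2}$. Sharpness is established by testing with data $[\omega_0,\omega_1]=[\lambda_1^{-1}e_1,0]$ concentrated in the lowest mode, for which a direct asymptotic computation of $W_1(T)^{-1}$ as $T\to 0^{+}$ returns a matching $T^{-3}$ lower bound on the quadratic form $\langle x_0, e^{\mathcal{A}_1^{T}T}W_1(T)^{-1}e^{\mathcal{A}_1T}x_0\rangle$.

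The main obstacle is the uniform-in-$k$ lower bound on $\lambda_{\min}(W_k(T))$ in the transitional regime $\lambda_kT\sim 1$, where neither the small-time Taylor expansion nor the long-time Lyapunov steady state is sharp; one must handle the exact exponential $e^{M\tau}$ on the full interval $[0,\lambda_kT]$ using the Hurwitz spectrum of $M$ (this is precisely where the hypothesis $\rho>0$ enters, and the excluded value $\rho=2$ corresponds to the non-generic Jordan-block case). Once this uniform Gramian estimate is secured, the passage from the mode-wise Kalman construction to the fully infinite-dimensional analytic dynamics \eqref{2ndODE} follows the lifting template of \cite{1} for Kalman-controllable ODE families embedded in analytic semigroup dynamics.
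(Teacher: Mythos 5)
Your argument is essentially sound, but it takes a genuinely different route from the paper. The paper does not prove Theorem \ref{thm:Blowuprate} itself: it quotes the result from \cite{1,13,14}, and the machinery it actually develops (Theorem \ref{thm:Discrete}, then the proofs of Theorems \ref{thm:FEM} and \ref{thm:FDM}) is the discrete approach of \cite{1}: verify the Kalman rank condition with index $k=1$, a uniform-in-$N$ bound on $\|\mathcal{K}_N^{-1}\|$, and the analyticity-type estimate $\|\mathcal{A}_N^j e^{\mathcal{A}_N t}\|\le E_k/t^j$ for the FEM/FDM truncations, the null controller being the explicit Triggiani steering function $u_N^*=\mu_0+\mu_1'$ built from $f_T$ and $\mathcal{K}_N^{-1}$, not the Gramian-optimal control. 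You instead exploit the exact diagonalization of \eqref{2ndODE} in the Dirichlet-Laplacian eigenbasis and rescale the $2\times 2$ controllability Gramians, $W_k(T)=\lambda_k^{-1}\widetilde W(\lambda_k T)$, obtaining $T^{-3}$ for low modes and exponential smallness for high modes; since the modes decouple and the control acts diagonally, the mode-wise minimal controls assemble into the global minimal-norm control, and your mode-one computation also supplies the matching lower bound, which the paper only inherits from \cite{13,14}. Two remarks: the ``transitional regime'' $\lambda_k T\sim 1$ that you single out as the main obstacle is actually harmless, because $\widetilde W(S)$ is continuous, increasing in the semidefinite order, and positive definite for $S>0$, so $\lambda_{\min}(\widetilde W(S))\gtrsim S^3$ on $(0,1]$ follows from the Taylor expansion near $0$ plus compactness, and uniformity in $k$ is automatic since $k$ enters only through $S=\lambda_k T$; also $\rho=2$ is not an obstruction to the Hurwitz decay (a Jordan block still gives $(1+\tau)e^{-\tau}$ decay), the exclusion in the paper being tied to the explicit diagonalization of the test solution in Section 7. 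The trade-off: your Gramian/scaling argument is elementary, self-contained, and yields sharpness directly, but it relies on perfect spectral decoupling (interior control on all of $\Omega$, commuting structure) and does not transfer to the non-spectral FEM/FDM approximations, which is exactly what the paper's route via Theorem \ref{thm:Discrete} is designed to handle.
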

The proof of Theorem \ref{thm:Blowuprate} was given in \cite{1,13,14} via using different techniques. While the weighted operator theoretic multiplier method and the analyticity of the corresponding semigroups --based on a continuous line of argument-- are utilized in \cite{13,14}, the proof in \cite{1} depends upon a discrete approach which was also used for the validation of the spectral truncations to the controlled dynamical system \eqref{MainNC}. Since our main goal here is to show ``numerically" that each finite dimensional approximating null controller (in the FEM and FDM approximation scheme) and their limiting controller manifest the same asymptotics as the minimal energy function $\mathcal{E}_{min}(T)$ for the infinite dimensional system \eqref{MainNC}, for the sake of clarity, we will provide below the following detailed theory that we will utilize:\\

\noindent Consider the following finite dimensional control system:
\begin{equation}\label{DiscRes1}
Y_N^{'} = \mathcal{A}_{N} Y_N + \mathcal{B}_{N} U_N, \qquad Y_N(0) = Y_{N,0} \in \bbR^{(k+1)N}, \qquad N = 1,2,...,
\end{equation}
where $\mathcal{A}_{N}$ is $(k+1)N \times (k+1)N$ and $\mathcal{B}_{N}$ is $(k+1)N \times N$ matrices, and the control $U_N \in L^2(0, T; \bbR^{N\times 1})$. Also, define the following $(k+1)N \times (k+1)N$ \textit{Kalman} matrix $\mathcal{K}_N$ as
\begin{equation}\label{KMatrix}
\mathcal{K}_N = [\mathcal{B}_{N}, \mathcal{A}_{N}\mathcal{B}_{N}, \mathcal{A}_{N}^2\mathcal{B}_{N}, ... , \mathcal{A}_{N}^k \mathcal{B}_{N}].
\end{equation}
It was shown in \cite{4} that if $\mathcal{K}_{N}$ has full rank for any $N = 1,2,...$, then the system \eqref{DiscRes1} is exactly controllable by means of controls in $L^2(0,T; \bbR^{N})$. Also, the control function $u_N^*(t)$ which steers the initial data $Y_{N,0}$ to the origin in given time $T>0$ was constructed in \cite{5} as follows:\\
 
 \noindent Define the scalar-valued function $f_T(t)$ and the $(k+1)N$ vector $\mu(t)$ as  

\begin{equation}\label{scalarf}
f_T(t) = \frac{t^k(T-t)^k}{C_{T,k}}, \qquad C_{T,k} = \int_0^T \ {t^k(T-t)^k} \,dt,
\end{equation}
and
\begin{equation}\label{controlvec}
\mu_N(t) = \begin{bmatrix}
\mu_0(t) \\
\mu_1(t) \\
\mu_2(t) \\
\vdots      \\
\mu_k(t) 
\end{bmatrix}= -\mathcal{K}_N^{-1} e^{\mathcal{A}_{N} t} Y_{N,0} f_T(t), \qquad 0\le t \le T,
\end{equation}
where each component $\mu_j$ is an $N-$vector. It was proved in \cite{5} that the choice of the following type of control function in \eqref{DiscRes1} will indeed steers the initial data $Y_{N,0}$ to the origin. \begin{equation}\label{control1}
u_N^*(t) = \mu_0(t) + \mu_1^{'}(t) + \mu_2^{''}(t) + \cdots + \mu_k^{(k)}(t).
\end{equation}
That is, the solution $Y_N$ of \eqref{DiscRes1} with control $u_N^*(t)$ satisfies the terminal condition $Y_N(T)=0.$ With this type of control function $u_N^*(t)$ in mind, we recall the following result from \cite{1} which will be the main ingredient in the application of our numerical schemes: 
\begin{theorem}\label{thm:Discrete}
With reference to the system \eqref{DiscRes1}, assume that the following conditions hold:\\

\noindent \textbf{(A1)} The Kalman matrix $\mathcal{K}_N$ satisfies the Kalman rank condition with index $k$. That is, 
 $$Rank(K_N)=(k+1)N~ for~ N = 1,2,...$$
 \textbf{(A2)} There exists $C_k>0$ independent of $N$ such that
\begin{equation}\label{invbound}
\| \mathcal{K}_N^{-1} \| \le C_k,
\end{equation}
\textbf{(A3)} There exists a constant $E_k$ such that 
\begin{equation}\label{ANbound}
\| \mathcal{A}_N^{j} e^{\mathcal{A}_{N} t} \| \le \frac{E_k}{t^j},\qquad (uniformly ~~ in~~ N) \qquad j = 0,1,...,k.
\end{equation}
Then the steering controls provided in \eqref{control1} obey the estimate
\begin{equation}\label{controlbound}
\Big( \int_0^T \ {\| u_N^*(t) \|^2} \,dt \Big)^{\frac{1}{2}} \le C_k^* \frac{\| Y_{N,0} \| }{T^{k + \frac{1}{2}}},
\end{equation}
where $C_k^*$ is a positive constant independent of $N = 1,2,...$.
\end{theorem}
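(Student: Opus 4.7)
The plan is to start from the explicit formula (\ref{control1}) for $u_N^*(t)$ and estimate each summand $\mu_j^{(j)}(t)$ in $L^2(0,T;\mathbb{R}^N)$, then use assumptions (A1)--(A3) to control the operator-valued factors uniformly in $N$, and finally use a rescaling $s=t/T$ to extract the sharp $T$-dependence. Since $\mu_j(t)$ is the $j$-th $N$-block of the vector $-\mathcal{K}_N^{-1}e^{\mathcal{A}_N t}Y_{N,0}\,f_T(t)$, Leibniz' rule gives
\begin{equation*}
\mu_j^{(j)}(t) \;=\; -\Bigl[\mathcal{K}_N^{-1}\sum_{i=0}^{j}\binom{j}{i} f_T^{(i)}(t)\,\mathcal{A}_N^{\,j-i} e^{\mathcal{A}_N t}\,Y_{N,0}\Bigr]_j ,
\end{equation*}
so by (A2) and (A3),
\begin{equation*}
\|\mu_j^{(j)}(t)\| \;\le\; C_k E_k\,\|Y_{N,0}\| \sum_{i=0}^{j}\binom{j}{i}\,|f_T^{(i)}(t)|\,t^{-(j-i)}.
\end{equation*}

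The next step is to compute $\int_0^T |f_T^{(i)}(t)|^2 \,t^{-2(j-i)}\,dt$. The natural move is the change of variable $s=t/T$. Since $C_{T,k}=T^{2k+1}B(k+1,k+1)$, one gets $f_T(t)=T^{-1}g(t/T)$ with $g(s)=s^k(1-s)^k/B(k+1,k+1)$, hence $f_T^{(i)}(t)=T^{-(i+1)} g^{(i)}(t/T)$, and therefore
\begin{equation*}
\int_0^T |f_T^{(i)}(t)|^{2}\,t^{-2(j-i)}\,dt \;=\; T^{-(2j+1)} \int_0^1 |g^{(i)}(s)|^{2} s^{-2(j-i)}\,ds .
\end{equation*}
The integral on the right is the delicate point: near $s=0$, the factor $g^{(i)}(s)$ vanishes like $s^{k-i}$ because $g$ has a zero of order $k$ at $0$, so the integrand behaves like $s^{2(k-j)}$, which is integrable precisely because $j\le k$; the symmetric analysis at $s=1$ is identical. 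The key observation driving the argument is therefore that the power $k$ in the definition of $f_T$ was chosen exactly so that the endpoint zeros of $f_T$ cancel the singularities introduced by the bounds $\|\mathcal{A}_N^{j-i}e^{\mathcal{A}_N t}\|\le E_k/t^{j-i}$ from (A3).

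Combining these pieces via the triangle inequality and Minkowski in $L^2(0,T)$ yields
\begin{equation*}
\Bigl(\int_0^T \|\mu_j^{(j)}(t)\|^{2}dt\Bigr)^{1/2} \;\le\; \widetilde{C}_{k,j}\,\frac{\|Y_{N,0}\|}{T^{\,j+1/2}},
\end{equation*}
and then summing over $j=0,1,\dots,k$ in (\ref{control1}) gives the asserted estimate (\ref{controlbound}), with the worst term $j=k$ dictating the final rate $T^{-(k+1/2)}$. The fact that $\widetilde{C}_{k,j}$ is independent of $N$ comes entirely from the uniform bounds (A2) and (A3), while (A1) is what makes $\mathcal{K}_N^{-1}$ exist in the first place so that the construction (\ref{controlvec})--(\ref{control1}) is well defined.

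The main obstacle is the balance inside $\int_0^1 |g^{(i)}(s)|^2 s^{-2(j-i)}\,ds$: one must verify carefully that the order of the endpoint zeros of $f_T$ is large enough that this integral remains finite for every admissible pair $(i,j)$ with $0\le i\le j\le k$. Once that bookkeeping is done, the $T$-scaling pops out cleanly from the substitution $s=t/T$, and the remainder of the argument is a routine application of (A2) and (A3).
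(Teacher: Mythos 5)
Your argument is correct, and it is worth noting that the paper itself does not prove Theorem \ref{thm:Discrete}: the theorem is recalled from Triggiani \cite{1} (with the control construction from \cite{5}), and your proof is essentially the argument behind that cited result — Leibniz differentiation of the blocks of $-\mathcal{K}_N^{-1}e^{\mathcal{A}_N t}Y_{N,0}f_T(t)$, the uniform-in-$N$ bounds (A2)--(A3) for $\mathcal{K}_N^{-1}$ and $\mathcal{A}_N^{\,j-i}e^{\mathcal{A}_N t}$, and the rescaling $s=t/T$ together with the observation that the order-$k$ zero of $f_T$ at $t=0$ absorbs the $t^{-(j-i)}$ singularity, producing the factor $T^{-(j+1/2)}$ for each block. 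Two small points of bookkeeping. First, the weight $t^{-(j-i)}$ coming from (A3) is singular only at $t=0$, so nothing actually needs to be checked at $s=1$; the vanishing of $f_T$ at $t=T$ is used in verifying the steering property $Y_N(T)=0$ (which the theorem takes as given from \cite{5}), not in the $L^2$ estimate, so your remark that the analysis at $s=1$ is ``identical'' is superfluous rather than wrong. Second, summing your block estimates yields $\sum_{j=0}^{k}\widetilde C_{k,j}\,T^{-(j+1/2)}$, which is dominated by $C_k^*\,T^{-(k+1/2)}$ only on a bounded range of $T$ (say $0<T\le 1$, where $T^{-(j+1/2)}\le T^{-(k+1/2)}$ for $j\le k$); this is exactly the small-time regime in which the theorem is applied (Theorems \ref{thm:FEM} and \ref{thm:FDM} take $T$ arbitrarily small), but the restriction, or an absorption of the lower-order terms into the constant for $T\le T_0$, should be stated explicitly since for large $T$ the $j=0$ term dominates instead.
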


\section{\textbf{Preliminaries}}

As mentioned in Section 1, it was theoretically (without any numerical experiment) shown in \cite{NCGAP} that Theorem \ref{thm:Discrete} can be employed to justify the use of finite element method approximations to numerically recover a solution to the null controllability problem \textbf{(NC)}. Our main goal here is to compare two numerical approximation schemes FEM vs FDM to see that the approximations $\{u_N^*\}$ and their limiting controller $u^*$ manifest the same asymptotics of the minimal energy function $\mathcal{E}_{min}(T)$ given in \eqref{minnorm}. For this, we will apply the FEM and FDM methodologies to the finite dimensional control system \eqref{DiscRes1} separately. The theoretical justification of the use of FEM approximation was already given in \cite[Theorem 4]{NCGAP}. Since we will refer to this scheme in the FEM numerical implementation, for the completeness and the convenience of the readers, we will remind it here: 
\subsection{Finite Element Method (FEM) Approximation Scheme for \eqref {DiscRes1}:\\ \\ Application of Theorem \ref{thm:Discrete}  }
\vspace{0.3cm}

Let $\mct$ be a triangulation (mesh) of $\Omega$, where $N$ is the number of vertices (nodes) in the triangulation $\mct$. For a triangle (element) $K\in \mct$,
we denote by $h_K = {\rm diam}(K)$ and set $h = \max_{K\in \mct} h_K$. We make the classical assumptions on the family of meshes on $\Omega$ (we refer the reader \cite{17} for details): there exist constants $c_0, c_1, c_2, c_3$ and $c_4$, independent of any given mesh in the family, such that the following hold
\begin{itemize}
\item For any given mesh $\mct$ in the family, let $p_{\mct}$ denotes the greatest number of elements to which any of the nodes belongs. Then
\begin{align*}
p_{\mct} \le c_0.
\end{align*}

\item For any triangle (or element) $K\in\mct$ with area $R_K$,
\begin{align*}
\frac{c_1}{N} \le R_K \le \frac{c_2}{N}.
\end{align*}

\item For any triangle in the given mesh with diameter $h_K$,
\begin{align*}
\frac{c_3}{N^{\frac{1}{2}}} \le h_K \le \frac{c_4}{N^{\frac{1}{2}}}.
\end{align*}

\end{itemize}
Also assume that $\{ \phi_1, ..., \phi_N\}$ are the standard basis functions for the conforming $H^1$-finite element space $V_N$, that is 
\begin{equation}\label{controlbound}
V_N = Span\{ \phi_1, ..., \phi_N\} \subset H_0^1(\Omega).
\end{equation}
The restriction of any basis function $\phi_i(x,y), i =1,2, ... ,N$ to any element $K\in \mct$ is a polynomial on $K$, i.e. $\phi_i(x,y), i=1,2,...,N$ is a piecewise polynomial in $\bar\Omega$. Also, if $\{ (x_i, y_i) \}_{i=1}^N$ are the nodes of $\mct$, then $\{ \phi_1, ..., \phi_N\}$ can be arranged such that $\phi_i(x_j,y_j) = \delta_{ij}, i, j = 1,2, ... ,N$. Define the following positive definite symmetric matrices 
\begin{equation}\label{FEMM}
\textit{(Mass)}\; M_N =  \begin{bmatrix}
(\phi_1, \phi_1) & \cdots & (\phi_1, \phi_N)\\
\vdots & & \vdots \\
(\phi_N, \phi_1) & \cdots & (\phi_N, \phi_N)
\end{bmatrix},
\end{equation}
\begin{equation}\label{FESM}
\textit{(Stiffness)}\; S_N =  \begin{bmatrix}
(\nab\phi_1, \nab\phi_1) & \cdots & (\nab\phi_1, \nab\phi_N)\\
\vdots & & \vdots \\
(\nab\phi_N, \nab\phi_1) & \cdots & (\nab\phi_N, \nab\phi_N)
\end{bmatrix}.
\end{equation}
Then the FEM approximating matrix to the generator 
\begin{equation}\label{SDG}
\mathcal{\overline{A}} = \begin{bmatrix}
0 & A  \\
-A & -\rho A 
\end{bmatrix}
\end{equation}
of the system \eqref{2ndODE} is given by
\begin{equation}\label{FEMSDG}
\mathcal{A}_{FE, N} = \begin{bmatrix}
\bigzero_N & M_N^{-1} S_N  \\
-M_N^{-1} S_N & -\rho M_N^{-1} S_N 
\end{bmatrix}
\end{equation}
where $\bigzero_N$ is the $N\times N$ zero matrix. Given arbitrary $[f,g]\in \bbR^{2N}$ and $\zeta\in L^2(0, T; \bbR^N)$, if we set 
\begin{equation}
\label{ASOLREPFEM}
\begin{bmatrix}
\xi(t) \\
\tilde{\xi}(t)
\end{bmatrix}=e^{\mathcal{A}_{FE, N}t}\begin{bmatrix}
f \\
g
\end{bmatrix}+\int_0^t \ {e^{\mathcal{A}_{FE ,N}(t-s)}\begin{bmatrix}
0 \\
\zeta(s)
\end{bmatrix}}\,ds,
\end{equation}
then the variables $[\xi(t), \tilde{\xi}(t)]$ satisfy the following ODE system:
\begin{subequations}
\label{FEMMainNCq}
\begin{alignat}{2}
\label{FEMMainNC1}
&\xi^{'}(t) = M_N^{-1}S_N\tilde{\xi}(t),\\
\label{FEMMainNC2}
&{\tilde\xi}^{'}(t) = -M_N^{-1}S_N\xi(t) -\rho M_N^{-1}S_N\tilde{\xi}(t) + \zeta(t),\\
\label{FEMMainNC3}
&[\xi(0), \tilde{\xi}(0)]  = [f,g]\in \bbR^{2N}.
\end{alignat}
\end{subequations}
Observe that \eqref{FEMMainNCq} is equivalent to the semidiscrete variational formulation of \eqref{2ndODE}. That is,
\begin{subequations}
\label{FEMVF}
\begin{alignat}{2}
\label{FEMVF1}
(v_N^{'}(t), \psi_N) &= (\nab w_N(t), \nab\psi_N),\quad \forall \psi_N\in V_N,\\
\label{FEMVF2}
(w_N^{'}(t), \varphi_N) &= -(\nab v_N(t), \nab\varphi_N) -\rho(\nab w_N(t), \nab\varphi_N) + (u_N(t), \varphi_N) ,\quad \forall \varphi_N\in V_N,\\
\label{FEMVF3}
[v_N(0), w_N(0)]  &= [v_{0,N}, w_{0,N}]\in V_N \times V_N,
\end{alignat}
\end{subequations}
where
\begin{align*}
v_N(t) = \sum_{i=1}^{N} \xi_i(t)\phi_i; \quad w_N(t) = \sum_{i=1}^{N} {\tilde{\xi}}_i(t)\phi_i; \quad u_N(t) = \sum_{i=1}^{N} {\zeta}_i(t)\phi_i,
\end{align*}
and
\begin{align*}
v_{0,N} = \sum_{i=1}^{N} f_i\phi_i; \quad w_{0,N} = \sum_{i=1}^{N} g_i\phi_i.
\end{align*}
 The following Theorem for the approximating system \eqref{FEMVF} was given in \cite[Theorem 4]{NCGAP}:
\begin{theorem}\label{thm:FEM} 
Suppose the classical mesh assumptions above are in place. Let also time $T>0$ be arbitrarily small. Then for the finite dimensional system \eqref{FEMVF} which approximates \eqref{2ndODE} there exists a sequence of null controllers $\{ u_N^* \} \subset L^2(0, T; \bbR^N)$, built upon the recipe provided in \cite{5}, for which the following estimate obtains, uniformly in N:
\begin{equation}\label{controlboundFEM1}
\Big( \int_0^T \ {\| u_N^*(t) \|^2} \,dt \Big)^{\frac{1}{2}} \le C T^{-\frac{3}{2}} \| [v_{0,N}, w_{0,N}]\|_{L^2(\Omega) \times L^2(\Omega)},
\end{equation}
where the constant $C$ is independent of N. \end{theorem}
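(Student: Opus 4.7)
The target blow-up rate $T^{-3/2}$ matches the bound in Theorem \ref{thm:Discrete} at index $k = 1$, so my plan is to verify the hypotheses (A1)--(A3) for the FEM system with $\mathcal{A}_N := \mathcal{A}_{FE,N}$ and $\mathcal{B}_N := \begin{bmatrix} \bigzero_N \\ I_N \end{bmatrix}$. To match the $L^2(\Omega)$-based norm appearing on the right-hand side of \eqref{controlboundFEM1}, I would equip $\bbR^N$ with the mass-weighted inner product $\langle \xi, \eta\rangle_{M_N} := \xi^{\top} M_N \eta$, so that the coordinate map $\xi \mapsto \sum_i \xi_i \phi_i$ is an isometry onto $V_N \subset L^2(\Omega)$; under this choice $B_N := M_N^{-1} S_N$ becomes self-adjoint and positive definite, and $\bbR^{2N}$ receives the natural product inner product.

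First, a direct block computation gives
\begin{equation*}
\mathcal{K}_N = \begin{bmatrix} \bigzero_N & B_N \\ I_N & -\rho B_N \end{bmatrix}, \qquad \mathcal{K}_N^{-1} = \begin{bmatrix} \rho I_N & I_N \\ B_N^{-1} & \bigzero_N \end{bmatrix},
\end{equation*}
so that (A1) reduces to the invertibility of $B_N$ (automatic, since $M_N, S_N$ are positive definite), while (A2) reduces to a uniform-in-$N$ bound on $\|B_N^{-1}\|$. The latter follows from the min-max principle: because $V_N \subset H_0^1(\Omega)$, the smallest eigenvalue of $B_N$ is bounded below by the first Dirichlet eigenvalue $\lambda_1 > 0$ of the operator $A$ in \eqref{DL}, whence $\|B_N^{-1}\| \le 1/\lambda_1$ independently of $N$.

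The main technical obstacle is the uniform sectorial estimate (A3). The crucial observation is that $\mathcal{A}_{FE,N}$ is a matrix polynomial in the scalar operator $B_N$: with the fixed $2\times 2$ matrix $M := \begin{bmatrix} 0 & 1 \\ -1 & -\rho \end{bmatrix}$, each two-dimensional subspace $V_k := \mathrm{span}\{[\phi_k^{(N)}, 0],\, [0, \phi_k^{(N)}]\}$ associated with an $M_N$-orthonormal eigenpair $(\lambda_k^{(N)}, \phi_k^{(N)})$ of $B_N$ is invariant for $\mathcal{A}_{FE,N}$, whose restriction is $\mathcal{A}_k = \lambda_k^{(N)} M$. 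Since $\rho \ne 2$, $M$ has distinct eigenvalues $\mu^{\pm} = \tfrac12(-\rho \pm \sqrt{\rho^2 - 4})$ with $\mathrm{Re}(\mu^{\pm}) = -\rho/2 < 0$ and admits a fixed diagonalization $M = P D P^{-1}$. Hence
\begin{equation*}
e^{\mathcal{A}_k t} = P\, \mathrm{diag}\bigl(e^{\mu^{\pm}\lambda_k^{(N)} t}\bigr)\, P^{-1}, \qquad \mathcal{A}_k e^{\mathcal{A}_k t} = \lambda_k^{(N)}\, P\, \mathrm{diag}\bigl(\mu^{\pm} e^{\mu^{\pm}\lambda_k^{(N)} t}\bigr)\, P^{-1},
\end{equation*}
and combining $|e^{\mu^{\pm}\lambda t}| = e^{-\rho \lambda t/2}$ with the elementary bound $\sup_{\lambda>0} \lambda e^{-\rho\lambda t/2} = 2/(e\rho t)$ yields $\|e^{\mathcal{A}_k t}\| \le \kappa(P)$ and $\|\mathcal{A}_k e^{\mathcal{A}_k t}\| \le C(\rho)/t$, uniformly in $k$ and $N$. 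Since $\mathcal{A}_{FE,N}$ is block-diagonal in the joint eigenbasis of $B_N$, taking the supremum over $k$ delivers (A3) for $j = 0, 1$ with constants depending only on $\rho$.

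With (A1)--(A3) verified with $N$-independent constants, Theorem \ref{thm:Discrete} applied at index $k=1$ immediately produces the bound \eqref{controlboundFEM1} for the control $u_N^*$ built via \eqref{scalarf}--\eqref{control1}, after noting that $\|Y_{N,0}\|$ in the $M_N$-inner product equals $\|[v_{0,N}, w_{0,N}]\|_{L^2(\Omega)\times L^2(\Omega)}$. The delicate point in this route is the sectorial estimate (A3); the reduction of $\mathcal{A}_{FE,N}$ through the fixed $2\times 2$ matrix $M$ bypasses any dependence on the (unbounded) conditioning of $B_N$ and is precisely what makes the constants genuinely $N$-independent.
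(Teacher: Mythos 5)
Your proposal is essentially correct, but it follows a different route from the one this paper relies on. Here the FEM result is not reproved at all: Theorem \ref{thm:FEM} is quoted from \cite[Theorem 4]{NCGAP}, and the only detailed verification of the hypotheses of Theorem \ref{thm:Discrete} in this paper is for the FDM system (Theorem \ref{thm:FDM}), where \textbf{(A3)} is obtained by operator-energy arguments: dissipativity of $\mathcal{A}_{FD,N}$ gives the $j=0$ bound, and the $j=1$ bound $\|\mathcal{A}_{FD,N}e^{\mathcal{A}_{FD,N}t}\|\le C_{\rho}/t$ comes from time-weighted multiplier identities (multiplying the differentiated system by $t^2D_N^{-1}z_{N_{tt}}$, $tD_N^{-2}z_{N_{tt}}$, etc.), with no spectral decomposition. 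Your argument instead renorms $\bbR^{2N}$ with the mass inner product so that $B_N=M_N^{-1}S_N$ is self-adjoint positive definite, reduces $\mathcal{A}_{FE,N}$ to the $2\times2$ blocks $\lambda_k^{(N)}M$, and reads off \textbf{(A3)} from the fixed diagonalization of $M$; this is cleaner and makes the $N$-independence transparent (and, as a bonus, your min–max bound $\|B_N^{-1}\|\le 1/\lambda_1$ does not even use the quasi-uniformity assumptions, whereas the energy route of the paper transfers unchanged to nonsymmetric or merely dissipative discretizations where a joint eigenbasis is unavailable). One small repair: the identity $\mathrm{Re}(\mu^{\pm})=-\rho/2$ and hence $|e^{\mu^{\pm}\lambda t}|=e^{-\rho\lambda t/2}$ holds only for $\rho<2$; for $\rho>2$ (the regime used in the paper's experiments, $\rho=5/2$) the eigenvalues $\mu^{\pm}=\tfrac12(-\rho\pm\sqrt{\rho^2-4})$ are real, distinct and negative, so you should replace $\rho/2$ by $c_{\rho}:=|\mu^{+}|>0$; the bound $\sup_{\lambda>0}\lambda e^{-c_{\rho}\lambda t}=1/(ec_{\rho}t)$ then gives the same conclusion with a constant depending only on $\rho$, so the argument survives intact.
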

\section{\textbf{Finite Difference Method (FDM) Approximation Scheme for \eqref {DiscRes1}}   }
\vspace{0.2cm}

\noindent \textbf{Application of Theorem \ref{thm:Discrete}}
 Let $\Omega = (0,a)^2$, $a>0$, and $h = \frac{a}{n+1}$ for a positive integer $n$. Also, let $h_x \times h_y$ be the uniform grid of $\Omega$, where $h_x : 0 = x_0 < x_1 < \cdots < x_n < x_{n+1} = a$, and  $h_y : 0 = y_0 < y_1 < \cdots < y_n < y_{n+1} = a$.

 The finite difference method approximates the values of $v$ and $w$ in \eqref{2ndODE} at the grid points $\{ (x_i, y_j): i,j = 1, ..., n\}$. In particular, we use central difference formula to discritize the spatial derivatives in \eqref{2ndODE} to get 
\begin{subequations}
\label{FDF}
\begin{alignat}{2}
\label{FDF1}
v_{i,j}^{'} &= \frac{4w_{i,j} - w_{i-1,j} - w_{i+1,j} - w_{i,j-1} - w_{i,j+1}}{h^2},\\
\label{FDF2}
w_{i,j}^{'} &= \frac{-4v_{i,j} + v_{i-1,j} + v_{i+1,j} + v_{i,j-1} + v_{i,j+1}}{h^2} \\
&\phantom{{}=1}+ \rho \frac{-4w_{i,j} + w_{i-1,j} + w_{i+1,j} + w_{i,j-1} + w_{i,j+1}}{h^2} + u_{i,j},\notag
\end{alignat}
\end{subequations}
where $v_{i,j}, w_{i,j}, u_{i,j}$ are the approximations of $v, w, u$ at $(x_i, y_j),$ respectively. With respect to the finite difference (FDM) scheme, the FDM approximating matrix to the generator
\begin{equation}\label{SDG2}
\mathcal{\overline{A}} = \begin{bmatrix}
0 & A  \\
-A & -\rho A 
\end{bmatrix}
\end{equation}
of the system \eqref{2ndODE} is given as 
\begin{figure}
\captionsetup{justification=centering}
\begin{tikzpicture}[bullet/.style={circle,fill=#1,inner sep=1.5pt}, scale = 0.85] 
 \draw (0,0) grid (7,7) foreach \X in {1,2,3,4,5,6} {foreach \Y in {1,2,3,4,5,6}
 {
  (\X,\Y) node[bullet=black]{}
 }}
 (0,7.5) -- (0,-0.5)
 (-0.5,0) -- (7.5,0)
 foreach \X in {-1,,+1} {(-0.6,\X+2) node[left]{}
 (\X+3,-0.6) node[below]{}};
\end{tikzpicture}
\caption{Finite difference grid $h_x \times h_y$ with $n=6$.}
\end{figure}
\label{FDD}
\begin{equation}\label{FDMSDG}
\mathcal{A}_{FD ,N} = \begin{bmatrix}
\bigzero_N & D_N  \\
-D_N & -\rho D_N 
\end{bmatrix},
\end{equation}
where $N = n^2$ and $D_N$ is the $N \times N$ block matrix given by
\begin{equation}\label{MBN}
D_N = \frac{1}{h^2} \left[\begin{array}{c|c|c|c|c}
F_n & -I_n & \bigzero_n & \cdots& \bigzero_n\\
\hline
  -I_n & \ddots & \ddots &\ddots & \vdots\\
\hline
\bigzero_n & \ddots &\ddots& \ddots & \bigzero_n\\
\hline
\vdots &\ddots& \ddots & \ddots  & -I_n \\
\hline
\bigzero_n & \cdots & \bigzero_n &  -I_n & F_n 
\end{array}\right].
\end{equation}
Here, $I_n$ and $\bigzero_n$ are the $n\times n$ identity and zero matrices, respectively, and $F_n$ is the $n\times n$ matrix given by
\begin{equation*}
F_n = \left[\begin{array}{c|c|c|c|c}
4 & -1 & 0 & \cdots& 0\\
\hline
  -1 & \ddots & \ddots &\ddots & \vdots\\
\hline
0 & \ddots &\ddots& \ddots & 0\\
\hline
\vdots &\ddots& \ddots & \ddots  & -1 \\
\hline
0 & \cdots & 0 &  -1 & 4 
\end{array}\right].
\end{equation*}
Given arbitrary $[f,g]\in \bbR^{2N}$ and $\zeta\in L^2(0, T; \bbR^N)$, if we set 
\begin{equation}
\label{ASOLREPFDM}
\begin{bmatrix}
\xi(t) \\
\tilde{\xi}(t)
\end{bmatrix}=e^{\mathcal{A}_{FD ,N}t}\begin{bmatrix}
f \\
g
\end{bmatrix}+\int_0^t \ {e^{\mathcal{A}_{FD, N}(t-s)}\begin{bmatrix}
0 \\
\zeta(s)
\end{bmatrix}}\,ds,
\end{equation}
then the variables $[\xi(t), \tilde{\xi}(t)]$ satisfy the following ODE system:
\begin{subequations}
\label{FDMMainNC}
\begin{alignat}{2}
\label{FDMMainNC1}
&\xi^{'}(t) = D_N\tilde{\xi}(t),\\
\label{FDMMainNC2}
&{\tilde\xi}^{'}(t) = -D_N(\xi(t) + \rho\tilde{\xi}(t)) + \zeta(t),\\
\label{FDMMainNC3}
&[\xi(0), \tilde{\xi}(0)]  = [f,g]\in \bbR^{2N}.
\end{alignat}
\end{subequations}
Observe that \eqref{FDMMainNC} is equivalent to the semidiscrete finite difference scheme of \eqref{2ndODE}, that is $[\bv_N,\bw_N]$
\begin{subequations}
\label{FDMNC}
\begin{alignat}{2}
\label{FDMNC1}
&\bv_N^{'}(t) = D_N\bw_N(t),\\
\label{FDMNC2}
&\bw_N^{'}(t) = -D_N(\bv_N(t) + \rho\bw_N(t)) + \bu_N(t),\\
\label{FDMNC3}
&[\bv_N(0), \bw_N(0)]  = [\bv_{0,N},\bw_{0,N}]\in \bbR^{2N},
\end{alignat}
\end{subequations}
where
\begin{align*}
\bv_N(t) = \left[\begin{array}{c}
\xi_{1,1}(t)\\
\vdots\\
\xi_{1,n}(t)\\
\hline
\xi_{2,1}(t)\\
\vdots\\
\xi_{2,n}(t)\\
\hline
\vdots\\
\vdots\\
\hline
\xi_{n,1}(t)\\
\vdots\\
\xi_{n,n}(t)
\end{array}\right]; \; \bw_N(t) = \left[\begin{array}{c}
\tilde{\xi}_{1,1}(t)\\
\vdots\\
\tilde{\xi}_{1,n}(t)\\
\hline
\tilde{\xi}_{2,1}(t)\\
\vdots\\
\tilde{\xi}_{2,n}(t)\\
\hline
\vdots\\
\vdots\\
\hline
\tilde{\xi}_{n,1}(t)\\
\vdots\\
\tilde{\xi}_{n,n}(t)
\end{array}\right]; \; \bu_N(t) = \left[\begin{array}{c}
\zeta_{1,1}(t)\\
\vdots\\
\zeta_{1,n}(t)\\
\hline
\zeta_{2,1}(t)\\
\vdots\\
\zeta_{2,n}(t)\\
\hline
\vdots\\
\vdots\\
\hline
\zeta_{n,1}(t)\\
\vdots\\
\zeta_{n,n}(t)
\end{array}\right]; 
\end{align*}
\begin{align*}
\bv_{0,N} = \left[\begin{array}{c}
f_{1,1}(t)\\
\vdots\\
f_{1,n}(t)\\
\hline
f_{2,1}(t)\\
\vdots\\
f_{2,n}(t)\\
\hline
\vdots\\
\vdots\\
\hline
f_{n,1}(t)\\
\vdots\\
f_{n,n}(t)
\end{array}\right]; \;
\bw_{0,N} = \left[\begin{array}{c}
g_{1,1}(t)\\
\vdots\\
g_{1,n}(t)\\
\hline
g_{2,1}(t)\\
\vdots\\
g_{2,n}(t)\\
\hline
\vdots\\
\vdots\\
\hline
g_{n,1}(t)\\
\vdots\\
g_{n,n}(t)
\end{array}\right].
\end{align*}
Here, $\xi_{i,j}(t), \tilde{\xi}_{i,j}(t), \zeta_{i,j}(t), f_{i,j}(t), g_{i,j}(t)$ are the approximations of $\xi, \tilde{\xi}, \zeta, f, g$ at $(x_i, y_j, t)$, respectively. In the following Theorem, we state our first result which gives the existence of null controllers for the finite difference method (FDM) approximating system \eqref{FDMNC} that satisfies the required blow up estimate in Theorem \ref{thm:Blowuprate}.
\begin{theorem}\label{thm:FDM}
Let terminal time $T>0$ be arbitrarily small. Then for the finite dimensional system \eqref{FDMNC} which approximates \eqref{2ndODE} there exists a sequence of null controllers $\{ u_N^* \} \subset L^2(0, T; \bbR^N)$, built upon the recipe provided in \cite{5}, for which the following estimate obtains, uniformly in N:
\begin{equation}\label{controlboundFDM1}
\Big( \int_0^T \ {\| u_N^*(t) \|^2_{\bbR^N}} \,dt \Big)^{\frac{1}{2}} \le C T^{-\frac{3}{2}} \| [\bv_{0,N}, \bw_{0,N}]\|_{\bbR^{2N}},
\end{equation}
where the constant $C$ is independent of N. 

\end{theorem}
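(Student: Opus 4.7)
The plan is to derive Theorem \ref{thm:FDM} as a direct application of Theorem \ref{thm:Discrete} to the FDM approximating system \eqref{FDMNC}, taking the control index $k=1$ so that the output bound $T^{-(k+1/2)}=T^{-3/2}$ in \eqref{controlbound} matches the target \eqref{controlboundFDM1}. First I recast \eqref{FDMNC} in the form \eqref{DiscRes1} by setting
$$\mathcal{A}_N = \mathcal{A}_{FD,N}=\begin{bmatrix}\bigzero_N & D_N\\ -D_N & -\rho D_N\end{bmatrix},\qquad \mathcal{B}_N=\begin{bmatrix}\bigzero_N\\ I_N\end{bmatrix},\qquad Y_{N,0}=\begin{bmatrix}\bv_{0,N}\\ \bw_{0,N}\end{bmatrix}\in\bbR^{2N},$$
so that it suffices to verify hypotheses \textbf{(A1)}--\textbf{(A3)} of Theorem \ref{thm:Discrete} uniformly in $N$.

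For \textbf{(A1)} and \textbf{(A2)}, a direct computation gives
$$\mathcal{K}_N=[\mathcal{B}_N,\mathcal{A}_N\mathcal{B}_N]=\begin{bmatrix}\bigzero_N & D_N\\ I_N & -\rho D_N\end{bmatrix},\qquad \mathcal{K}_N^{-1}=\begin{bmatrix}\rho I_N & I_N\\ D_N^{-1} & \bigzero_N\end{bmatrix}.$$
Thus $\mathcal{K}_N$ has full rank $2N$ precisely because $D_N$ is invertible, and $\|\mathcal{K}_N^{-1}\|$ is uniformly bounded in $N$ as soon as $\|D_N^{-1}\|$ is. For this latter bound I exploit the explicit spectrum of $D_N$: the eigenvalues of the 5-point stencil on $(0,a)^2$ with homogeneous Dirichlet data are the standard $\lambda_{jk}=\tfrac{4}{h^2}\!\left(\sin^2\tfrac{j\pi h}{2a}+\sin^2\tfrac{k\pi h}{2a}\right)$, whose minimum $\lambda_{11}=\tfrac{8}{h^2}\sin^2\tfrac{\pi h}{2a}$ is monotone in $h$ and converges as $h\to 0$ to $\tfrac{2\pi^2}{a^2}>0$. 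Hence $\lambda_{\min}(D_N)$ is bounded below by a positive constant independent of $N$, yielding the required uniform bound on $\|D_N^{-1}\|$ and hence on $\|\mathcal{K}_N^{-1}\|$.

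The hard part will be \textbf{(A3)}, where the semigroup bounds must be established uniformly in $N$. My strategy is to block-diagonalize $\mathcal{A}_{FD,N}$: since $D_N$ is symmetric positive definite, write $D_N=U_N\Lambda_N U_N^{T}$ with $\Lambda_N=\mathrm{diag}(\lambda_1,\ldots,\lambda_N)$. Conjugation by $\mathrm{diag}(U_N,U_N)$ followed by an index reordering shows that $\mathcal{A}_{FD,N}$ is orthogonally similar to $\bigoplus_{i=1}^{N}\lambda_i B$, where $B=\begin{bmatrix}0 & 1\\ -1 & -\rho\end{bmatrix}$ is a fixed $2\times 2$ matrix whose eigenvalues $\tfrac{-\rho\pm\sqrt{\rho^2-4}}{2}$ lie strictly in the open left half-plane for $\rho>0,\,\rho\neq 2$. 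The scalar semigroup $s\mapsto e^{Bs}$ is therefore entire and exponentially stable, so an elementary argument (combining the uniform boundedness of $Be^{Bs}$ on compact $s$-intervals with its exponential decay at infinity) yields $\|e^{Bs}\|\le C_\rho$ and $\|Be^{Bs}\|\le C_\rho/s$ for all $s>0$ with $C_\rho$ depending only on $\rho$. The time-rescaling $\lambda_i B e^{\lambda_i B t}=\lambda_i Be^{B(\lambda_i t)}$ with $s=\lambda_i t$ then gives $\|(\lambda_i B)^j e^{\lambda_i B t}\|\le C_\rho/t^j$ uniformly in $i$ and $t>0$ for $j=0,1$; taking the maximum over the blocks of the direct sum delivers \eqref{ANbound} uniformly in $N$ for $\mathcal{A}_{FD,N}$.

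With (A1)--(A3) in hand, Theorem \ref{thm:Discrete} applied with $k=1$ produces null controllers $\{u_N^*\}\subset L^2(0,T;\bbR^N)$ via the recipe \eqref{control1} satisfying $\|u_N^*\|_{L^2(0,T;\bbR^N)}\le C_1^*\|Y_{N,0}\|/T^{3/2}$, which is precisely the claimed bound \eqref{controlboundFDM1}. The essential point making the whole scheme uniform in $N$ is the rescaling inside (A3): all $N$-dependence is absorbed into the individual eigenvalues $\lambda_i$, which cancel under $s=\lambda_i t$ and leave behind only a constant governed by the fixed $2\times 2$ matrix $B$.
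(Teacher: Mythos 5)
Your proposal is correct, and for the decisive hypothesis it takes a genuinely different route from the paper. The verification of \textbf{(A1)}--\textbf{(A2)} is essentially identical to the paper's Step 1: same Kalman matrix $\mathcal{K}_N=[\mathcal{B}_N,\mathcal{A}_{FD,N}\mathcal{B}_N]$, same explicit inverse $\begin{bmatrix}\rho I_N & I_N\\ D_N^{-1} & \bigzero_N\end{bmatrix}$, and the same uniform lower bound on the spectrum of $D_N$ (your limit $2\pi^2/a^2$ is in fact the correct value of the limit written in the paper). The difference is in \textbf{(A3)}. The paper proceeds by semigroup/energy methods: dissipativity plus maximality of $\mathcal{A}_{FD,N}$ give the contraction bound for $j=0$, and the $j=1$ bound $\|\mathcal{A}_{FD,N}e^{\mathcal{A}_{FD,N}t}\|\le C_\rho/t$ is extracted from weighted-in-time multiplier identities (multiplying the differentiated system by $t^2D_N^{-1}z_{N_{tt}}$, $tD_N^{-2}z_{N_{tt}}$, etc., and integrating by parts), with uniformity in $N$ coming from the fact that all constants depend only on $\rho$. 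You instead exploit the symmetry of $D_N$ to conjugate $\mathcal{A}_{FD,N}$ orthogonally (hence isometrically in the spectral norm) to $\bigoplus_i \lambda_i B$ with the fixed Hurwitz matrix $B=\begin{bmatrix}0&1\\-1&-\rho\end{bmatrix}$, and then the rescaling $(\lambda_iB)^je^{\lambda_iBt}=\lambda_i^jB^je^{B(\lambda_it)}$ makes the eigenvalues cancel exactly, reducing \eqref{ANboundFD} to the two elementary scalar facts $\sup_{s>0}\|e^{Bs}\|<\infty$ and $\sup_{s>0}s\|Be^{Bs}\|<\infty$. Your argument is shorter and makes the uniformity in $N$ completely transparent, but it leans on the fact that the two blocks of $\mathcal{A}_{FD,N}$ are built from one and the same symmetric matrix $D_N$ (so a single orthogonal diagonalization decouples the system); the paper's multiplier proof is less explicit but more robust, since it does not use the spectral decomposition at all and is the style of argument that carries over to situations such as the FEM generator $M_N^{-1}S_N$, where one no longer has a norm-preserving reduction to $2\times 2$ blocks, and it mirrors the analyticity estimates used for the infinite-dimensional dynamics. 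Both proofs conclude identically by invoking Theorem \ref{thm:Discrete} with $k=1$ to obtain the $T^{-3/2}$ bound.
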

\begin{proof}
 Our proof hinges on showing that the hypotheses of Theorem \ref{thm:Discrete} are satisfied under the setting of finite difference (FDM) approximation scheme. 

 The Kalman matrix of the system \eqref{FDMNC} is defined as the $2\times 2$ block matrix
\begin{equation}\label{FDMIKMP}
\mathcal{K}_N =[\mathcal{B}_N,\mathcal{A}_{FD ,N}\mathcal{B}_N]=\begin{bmatrix}
\bigzero_N & D_N  \\
I_N & -\rho D_N 
\end{bmatrix}
\end{equation}
where $\mathcal{B}_N = \begin{bmatrix}
\bigzero_N \\
I_N
\end{bmatrix}$,  $\mathcal{A}_{FD ,N}$ is the FDM approximating matrix given in \eqref{FDMSDG}, $D_N$ is the matrix in \eqref{MBN}. In order to show that the requirements \textbf{(A1)-(A3)} of Theorem \ref{thm:Discrete} holds, we will give the proof in two steps:\\

\noindent\textbf{Step 1:} Appealing to the theory of invertibility of $2\times 2$ block matrices in \cite{blockmatrix}, we observe that the Kalman matrix $\mathcal{K}_N$ defined in \eqref{FDMIKMP} will be invertible provided that the matrix $D_N$ (see \eqref{MBN}) is invertible. Since it can easily be proved that $D_N$ is a symmetric positive definite matrix it will be invertible which also yields that $\mathcal{K}_N$ is invertible with inverse 
\begin{equation}
\mathcal{K}_N^{-1} = \begin{bmatrix}
\rho I_N & I_N  \\
D_N^{-1} & \bigzero_N 
\end{bmatrix}.
\end{equation}
Using the Invertible Matrix Theorem we also infer that $\mathcal{K}_N$ will have the full rank $2N$ which proves the first requirement  \textbf{(A1)} of Theorem \ref{thm:Discrete} with index $k=1.$ To show that the matrix norm of the inverse matrix $\mathcal{K}_N^{-1}$ has a uniform bound that is independent of $N$, we use the special characterization of the matrix $D_N$ (see \cite{FDM2D} for details)
\begin{equation}
D_N = \frac{1}{h^2} (I_n \otimes E_n +  E_n \otimes I_n),
\end{equation}
where $E_n$ is the $n \times n$ matrix defined as
\begin{equation*}
E_n = \left[\begin{array}{c|c|c|c|c}
2 & -1 & 0 & \cdots& 0\\
\hline
  -1 & \ddots & \ddots &\ddots & \vdots\\
\hline
0 & \ddots &\ddots& \ddots & 0\\
\hline
\vdots &\ddots& \ddots & \ddots  & -1 \\
\hline
0 & \cdots & 0 &  -1 & 2 
\end{array}\right].
\end{equation*}
The eigenvalues of $D_N$ \cite{FDM2D} are given as
\begin{equation*}
\{ \lambda_{i, j} = \frac{1}{h^2}\big(4 - 2\big(\cos\big(\frac{i\pi}{n+1}\big) + \cos\big(\frac{j\pi}{n+1}\big)\big)\big): 1\le i,j \le n\}.
\end{equation*}
It can be observed that $\lambda_{i, j} > 0$ for all $1 \le i,j \le n$, and the smallest eigenvalue for $D_N$ is 
\begin{equation*}
\lambda_{1, 1} = \frac{4}{h^2}\big(1 - \cos\big(\frac{\pi}{n+1}\big)\big) = \frac{8\sin^2\big(\frac{h\pi}{2a}\big)}{h^2}\to \frac{2\pi}{a^2} \quad \textit{as} \quad h \to 0,
\end{equation*}
which yields that the eigenvalues of the symmetric positive definite matrix $D_N^{-1}$ will be bounded above uniformly in $N$ and $$\| D_N^{-1} \| \le C,$$ where the constant $C$ is independent of $N$. Consequently, if $\begin{bmatrix}
x_1 \\
x_2
\end{bmatrix} \in \bbR^{2N}$, then we have that 
\begin{align*}
\norm{\mathcal{K}_N^{-1}\begin{bmatrix}
x_1 \\
x_2
\end{bmatrix}}^2_{\bbR^{2N}} &= \| \rho x_1 + x_2  \|^2_{\bbR^N} + \| D_N^{-1} x_1\|^2_{\bbR^N}\\
&\le \max(1, \rho)(\|x_1\|^2_{\bbR^N} + \|x_2\|^2_{\bbR^N}) + \| D_N^{-1} \|^2 \|x_1\|^2_{\bbR^N}\\
&\le \max(1, \rho)(\|x_1\|^2_{\bbR^N} + \|x_2\|^2_{\bbR^N}) + C^2 (\|x_1\|^2_{\bbR^N} + \|x_2\|^2_{\bbR^N})\\
&\le \tilde{C}(\|x_1\|^2_{\bbR^N} + \|x_2\|^2_{\bbR^N}) 
\end{align*}
where $\tilde{C} = 2\max(1,\rho, C^2)$ is independent of $N$. This finishes the proof of requirement \textbf{(A2)} in Theorem \ref{thm:Discrete}.\\

\noindent\textbf{Step 2: }Since the Kalman rank condition is satisfied with index $k=1$, in this step, we will show that there are constants $D_j$ ($j=0,1)$ which satisfy (uniformly in $N$) the following inequalities:
\begin{equation}\label{ANboundFD}
\| \mathcal{A}_{FD, N}^{j} e^{\mathcal{A}_{FD, N} t} \| \le \frac{D_j}{t^j},\qquad j = 0,1.
\end{equation}

\noindent We start with the case $j=0:$ For this, we will show that the operator $\mathcal{A}_{FD,N}$ is maximal
disipative:

\textbf{a)} \textbf{Dissipativity: }For $[f,g]\in 
\mathbb{R}
^{2N},$%
\begin{eqnarray*}
\left\langle \mathcal{A}_{FD,N}\left[ 
\begin{array}{c}
f \\ 
g%
\end{array}%
\right] ,\left[ 
\begin{array}{c}
f \\ 
g%
\end{array}%
\right] \right\rangle  &=&\left\langle D_{N}g,f\right\rangle -\left\langle
D_{N}f,g\right\rangle -\rho \left\langle D_{N}g,g\right\rangle  \\
&=&-\rho \left\Vert D_{N}^{1/2}g\right\Vert \leq 0
\end{eqnarray*}

\textbf{(b) Maximality:} Given $[f,g]\in 
\mathbb{R}
^{2N},$ we consider the equation%
\[
\lbrack \lambda I_{2N}-\mathcal{A}_{FD,N}]\left[ 
\begin{array}{c}
v_{N} \\ 
z_{N}%
\end{array}%
\right] =\left[ 
\begin{array}{c}
f \\ 
g%
\end{array}%
\right] .
\]%
This becomes%
\[
\lambda v_{N}-D_{N}z_{N}=f
\]%
\[
\lambda z_{N}+D_{N}v_{N}+\rho D_{N}z_{N}=g
\]%
which after applying $-D_{N}$ to the first equation, and multiplying the
second one by $\lambda $ gives  
\[
-\lambda D_{N}v_{N}+D_{N}^{2}z_{N}=-D_{N}f
\]%
\[
\lambda ^{2}z_{N}+\lambda D_{N}v_{N}+\rho \lambda D_{N}z_{N}=\lambda g
\]%
and we get%
\[
\lambda ^{2}z_{N}+D_{N}^{2}z_{N}+\rho \lambda D_{N}z_{N}=\lambda g-D_{N}f.
\]%
Since $Null(\lambda ^{2}I_{N}+D_{N}^{2}+\rho \lambda D_{N})$ is empty then%
\[
z_{N}=(\lambda ^{2}I_{N}+D_{N}^{2}+\rho \lambda D_{N})^{-1}[\lambda g-D_{N}f]
\]%
and 
\[
v_{N}=\frac{1}{\lambda }D_{N}(\lambda ^{2}I_{N}+D_{N}^{2}+\rho \lambda
D_{N})^{-1}[\lambda g-D_{N}f]+\frac{1}{\lambda }f
\]%
this finishes the maximality of $\mathcal{A}_{FD,N}.$ Since $\{e^{\mathcal{A}%
_{FD,N}t}\}_{t\in 
\mathbb{R}
}$ is a group of contractions, then 
\begin{equation}
\left\Vert e^{\mathcal{A}_{FD,N}t}\right\Vert \leq 1,\text{ \ \ for every }%
t>0,\text{ \ \ }n\in 
\mathbb{N}
\label{1}
\end{equation}%
and the required estimate for the case $j=0$ is obtained with the constant $D_0=1$.

\bigskip 

To proceed with the case  $j=1,$ given the initial data $[v_{0N,}z_{0N}]\in 
\mathbb{R}
^{2N},$ set 
\begin{equation}
\left[ 
\begin{array}{c}
v_{N}(t) \\ 
z_{N}(t)%
\end{array}%
\right] =e^{\mathcal{A}_{FD,N}t}\left[ 
\begin{array}{c}
v_{0N} \\ 
z_{0N}%
\end{array}%
\right] .  \label{2}
\end{equation}%
Then taking the first and second derivative of both sides give%
\begin{equation}
\frac{d}{dt}\left[ 
\begin{array}{c}
v_{N}(t) \\ 
z_{N}(t)%
\end{array}%
\right] =\mathcal{A}_{FD,N}\left[ 
\begin{array}{c}
v_{N}(t) \\ 
z_{N}(t)%
\end{array}%
\right] \text{ \ \ \ \ \ \ \ or \ \ \ \ \ \ \ }%
\begin{array}{c}
v_{Nt}=D_{N}z_{N} \\ 
z_{Nt}=-D_{N}v_{N}-\rho D_{N}z_{N}%
\end{array}
\label{3}
\end{equation}%
and%
\begin{equation}
\frac{d^{2}}{dt^{2}}\left[ 
\begin{array}{c}
v_{N}(t) \\ 
z_{N}(t)%
\end{array}%
\right] =\mathcal{A}_{FD,N}\left[ 
\begin{array}{c}
v_{N_{t}}(t) \\ 
z_{N_{t}}(t)%
\end{array}%
\right] \text{ \ \ \ \ \ \ \ or \ \ \ \ \ \ \ }%
\begin{array}{c}
v_{Ntt}=D_{N}z_{N_{t}} \\ 
z_{Ntt}=-D_{N}v_{N_{t}}-\rho D_{N}z_{N_{t}}%
\end{array}
\label{4}
\end{equation}%
If we multiply the both sides of (\ref{3})$_{1}$ and (\ref{3})$_{2}$ by $%
v_{N},$ $z_{N},$ respectively and integrate from $0$ to $t$ we get 
\[
\int\limits_{0}^{t}\left\langle \left[ 
\begin{array}{c}
v_{Nt} \\ 
z_{Nt}%
\end{array}%
\right] ,\left[ 
\begin{array}{c}
v_{N} \\ 
z_{N}%
\end{array}%
\right] \right\rangle ds=\int\limits_{0}^{t}\left\langle \left[ 
\begin{array}{c}
D_{N}z_{N} \\ 
-D_{N}v_{N}-\rho D_{N}z_{N}%
\end{array}%
\right] ,\left[ 
\begin{array}{c}
v_{N} \\ 
z_{N}%
\end{array}%
\right] \right\rangle ds
\]%
or for $t>0,$%
\begin{equation}
\frac{1}{2}[\left\Vert v_{N}(t)\right\Vert ^{2}+\left\Vert
z_{N}(t)\right\Vert ^{2}]+\rho \int\limits_{0}^{t}\left\Vert
D_{N}^{1/2}z_{N}\right\Vert ^{2}ds=\frac{1}{2}[\left\Vert v_{0N}\right\Vert
^{2}+\left\Vert z_{0N}\right\Vert ^{2}]  \label{5}
\end{equation}%
Now, if we multiply (\ref{4})$_{1}$ by $t^{2}D_{N}^{-1}z_{N_{tt}},$ and (\ref%
{4})$_{2}$ by $-t^{2}D_{N}^{-1}v_{N_{tt}}$, integrate from $0$ to $t,$ and
add the resulting relations we obtain%
\begin{equation}
\int\limits_{0}^{t}s^{2}\left\langle z_{N_{t}},z_{N_{tt}}\right\rangle
ds+\int\limits_{0}^{t}s^{2}\left\langle v_{N_{t}},v_{N_{tt}}\right\rangle
ds+\rho \int\limits_{0}^{t}s^{2}\left\langle
z_{N_{t}},v_{N_{tt}}\right\rangle ds=0  \label{6}
\end{equation}%
Integrating by parts the first two terms on LHS of (\ref{6}) yields%
\begin{equation}
\frac{t^{2}}{2}[\left\Vert v_{N_{t}}(t)\right\Vert ^{2}+\left\Vert
z_{N_{t}}(t)\right\Vert ^{2}]+\rho \int\limits_{0}^{t}s^{2}\left\langle
z_{N_{t}},v_{N_{tt}}\right\rangle ds=\int\limits_{0}^{t}s[\left\Vert
v_{N_{t}}\right\Vert ^{2}+\left\Vert z_{N_{t}}\right\Vert ^{2}]ds  \label{7}
\end{equation}%
Invoking the first equation in (\ref{4}) also gives%
\begin{equation}
\frac{t^{2}}{2}[\left\Vert v_{N_{t}}(t)\right\Vert ^{2}+\left\Vert
z_{N_{t}}(t)\right\Vert ^{2}]+\rho \int\limits_{0}^{t}s^{2}\left\Vert
D_{N}^{1/2}z_{N_{t}}\right\Vert ^{2}ds=\int\limits_{0}^{t}s[\left\Vert
v_{N_{t}}\right\Vert ^{2}+\left\Vert z_{N_{t}}\right\Vert ^{2}]ds  \label{8}
\end{equation}%
To deal with RHS of (\ref{8}), we multiply (\ref{4})$_{1}$ by $%
tD_{N}^{-2}z_{N_{tt}},$ and (\ref{4})$_{2}$ by $-tD_{N}^{-2}v_{N_{tt}}$,
integrate from $0$ to $t,$ and add the resulting relations to have%
\[
\int\limits_{0}^{t}s\left\langle z_{N_{t}},D_{N}^{-1}z_{N_{tt}}\right\rangle
ds+\int\limits_{0}^{t}s\left\langle
v_{N_{t}},D_{N}^{-1}v_{N_{tt}}\right\rangle ds+\rho
\int\limits_{0}^{t}s\left\langle z_{N_{t}},D_{N}^{-1}v_{N_{tt}}\right\rangle
ds=0
\]%
This gives via integration by parts, 
\[
\frac{t}{2}[\left\Vert D_{N}^{-1/2}v_{N_{t}}(t)\right\Vert ^{2}+\left\Vert
D_{N}^{-1/2}z_{N_{t}}(t)\right\Vert ^{2}]+\rho
\int\limits_{0}^{t}s\left\langle z_{N_{t}},D_{N}^{-1}v_{N_{tt}}\right\rangle
ds \]
\[=\frac{1}{2}\int\limits_{0}^{t}[\left\Vert
D_{N}^{-1/2}v_{N_{t}}\right\Vert ^{2}+\left\Vert
D_{N}^{-1/2}z_{N_{t}}\right\Vert ^{2}]ds
\]%
Using (\ref{4})$_{1}$ and (\ref{3}), we then have for $t>0$,%
\[
\frac{t}{2}[\left\Vert D_{N}^{-1/2}v_{N_{t}}(t)\right\Vert ^{2}+\left\Vert
D_{N}^{-1/2}z_{N_{t}}(t)\right\Vert ^{2}]+\rho
\int\limits_{0}^{t}s\left\Vert z_{N_{t}}\right\Vert ^{2}ds
\]%
\begin{equation}
=\frac{1}{2}\int\limits_{0}^{t}[\left\Vert D_{N}^{1/2}z_{N}\right\Vert
^{2}+\left\Vert D_{N}^{1/2}v_{N}+\rho D_{N}^{1/2}z_{N}\right\Vert ^{2}]ds
\label{9}
\end{equation}%
For the RHS of (\ref{9}), we use (\ref{3})$_{2}$ to have%
\begin{eqnarray*}
\int\limits_{0}^{t}\left\langle D_{N}v_{N},v_{N}\right\rangle ds
&=&-\int\limits_{0}^{t}\left\langle z_{N_{t}},v_{N}\right\rangle ds-\rho
\int\limits_{0}^{t}\left\langle D_{N}z_{N},v_{N}\right\rangle ds \\
&=&-\left\langle z_{N},v_{N}\right\rangle |_{0}^{t}-\rho
\int\limits_{0}^{t}\left\langle
D_{N}^{1/2}z_{N},D_{N}^{1/2}v_{N}\right\rangle ds
\end{eqnarray*}%
Using (\ref{2}) and Young's inequality, we take%
\begin{eqnarray}
\int\limits_{0}^{t}\left\Vert D_{N}^{1/2}v_{N}\right\Vert ^{2}ds &\leq
&C\left\Vert [v_{0N,}z_{0N}]\right\Vert ^{2}+C_{\epsilon
}\int\limits_{0}^{t}\left\Vert D_{N}^{1/2}z_{N}\right\Vert ^{2}ds  \nonumber
\\
&\leq &C\left\Vert [v_{0N,}z_{0N}]\right\Vert ^{2},  \label{10}
\end{eqnarray}%
after using (\ref{5}). Now, applying (\ref{5}) and (\ref{10}) to the RHS of (%
\ref{9}), we get%
\[
\frac{t}{2}[\left\Vert D_{N}^{-1/2}v_{N_{t}}(t)\right\Vert ^{2}+\left\Vert
D_{N}^{-1/2}z_{N_{t}}(t)\right\Vert ^{2}]+\rho
\int\limits_{0}^{t}s\left\Vert z_{N_{t}}\right\Vert ^{2}ds
\]%
\begin{equation}
\leq C_{\rho }\left\Vert [v_{0N,}z_{0N}]\right\Vert ^{2}  \label{11}
\end{equation}%
Subsequently, if we multiply (\ref{4})$_{2}$ by $tD_{N}^{-1}v_{N_{t}}$ and
integrate in time we have%
\begin{eqnarray*}
\int\limits_{0}^{t}s\left\Vert v_{N_{t}}\right\Vert ^{2}ds
&=&-\int\limits_{0}^{t}s\left\langle
z_{N_{tt}},D_{N}^{-1}v_{N_{t}}\right\rangle ds-\rho
\int\limits_{0}^{t}s\left\langle z_{N_{t}},v_{N_{t}}\right\rangle ds \\
&=&-\int\limits_{0}^{t}s\left\langle
z_{N_{tt}},D_{N}^{-1}v_{N_{t}}\right\rangle ds-\rho
\int\limits_{0}^{t}s\left\langle z_{N_{t}},D_{N}z_{N}\right\rangle ds
\end{eqnarray*}%
after using (\ref{3})$_{1}$. If we also use (\ref{5}) and Young's Inequality
in the last relation we then obtain 
\begin{equation}
\int\limits_{0}^{t}s\left\Vert v_{N_{t}}\right\Vert ^{2}ds\leq \left\vert
\int\limits_{0}^{t}s\left\langle z_{N_{tt}},D_{N}^{-1}v_{N_{t}}\right\rangle
ds\right\vert +\epsilon \rho \int\limits_{0}^{t}s^{2}\left\Vert
D_{N}^{1/2}z_{N_{t}}\right\Vert ^{2}ds+C\left\Vert
[v_{0N,}z_{0N}]\right\Vert ^{2}  \label{12}
\end{equation}%
To handle the first term on RHS of (\ref{12}), integrating by parts we get%
\[
\int\limits_{0}^{t}s\left\langle z_{N_{tt}},D_{N}^{-1}v_{N_{t}}\right\rangle
ds=[s\left\langle D_{N}^{-1/2}z_{N_{t}},D_{N}^{-1/2}v_{N_{t}}\right\rangle
]|_{s=0}^{s=t}
\]%
\[
-\int\limits_{0}^{t}\left\langle
D_{N}^{-1/2}z_{N_{t}},D_{N}^{-1/2}v_{N_{t}}\right\rangle
ds-\int\limits_{0}^{t}s\left\langle
D_{N}^{-1/2}z_{N_{t}},D_{N}^{-1/2}v_{N_{tt}}\right\rangle ds
\]%
\[
=t\left\langle
D_{N}^{-1/2}z_{N_{t}}(t),D_{N}^{-1/2}v_{N_{t}}(t)\right\rangle
-\int\limits_{0}^{t}\left\langle
D_{N}^{-1/2}z_{N_{t}},D_{N}^{-1/2}v_{N_{t}}\right\rangle
ds-\int\limits_{0}^{t}s\left\Vert z_{N_{t}}\right\Vert ^{2}ds
\]%
After using (\ref{4})$_{1},$ 
\begin{equation}
t\left\langle D_{N}^{-1/2}z_{N_{t}}(t),D_{N}^{-1/2}v_{N_{t}}(t)\right\rangle
+\int\limits_{0}^{t}\left\langle D_{N}^{1/2}v_{N}+\rho
D_{N}^{1/2}z_{N},D_{N}^{1/2}z_{N}\right\rangle
ds-\int\limits_{0}^{t}s\left\Vert z_{N_{t}}\right\Vert ^{2}ds  \label{13}
\end{equation}%
Applying now the estimates (\ref{11}), (\ref{5}), and (\ref{10}) to RHS of (%
\ref{13}), we have%
\begin{equation}
\left\vert \int\limits_{0}^{t}s\left\langle
z_{N_{tt}},D_{N}^{-1}v_{N_{t}}\right\rangle ds\right\vert \leq C_{\rho
}\left\Vert [v_{0N,}z_{0N}]\right\Vert ^{2}  \label{14}
\end{equation}%
Now, using this estimate on the RHS of (\ref{12}), we get%
\begin{equation}
\int\limits_{0}^{t}s\left\Vert v_{N_{t}}\right\Vert ^{2}ds\leq \epsilon \rho
\int\limits_{0}^{t}s^{2}\left\Vert D_{N}^{1/2}z_{N_{t}}\right\Vert
^{2}ds+C\left\Vert [v_{0N,}z_{0N}]\right\Vert ^{2}  \label{15}
\end{equation}%
To conclude the proof of the case $j=1,$ we apply the estimates (\ref{11})
and (\ref{15}) to the RHS of (\ref{8}): this gives for $t>0,$ after taking $%
0<\epsilon <1,$%
\[
\frac{t^{2}}{2}[\left\Vert v_{N_{t}}\right\Vert ^{2}+\left\Vert
z_{N_{t}}\right\Vert ^{2}]+\rho (1-\epsilon
)\int\limits_{0}^{t}s^{2}\left\Vert D_{N}^{1/2}z_{N_{t}}\right\Vert
^{2}ds\leq C_{\rho }\left\Vert [v_{0N,}z_{0N}]\right\Vert ^{2}
\]%
Thus recalling (\ref{2}), we obtain%
\[
\left\Vert \frac{d}{dt}e^{\mathcal{A}_{FD,N}t}\left[ 
\begin{array}{c}
v_{0N} \\ 
z_{0N}%
\end{array}%
\right] \right\Vert \leq \frac{C_{\rho }}{t}\left\Vert \left[ 
\begin{array}{c}
v_{0N} \\ 
z_{0N}%
\end{array}%
\right] \right\Vert ,\text{ \ \ \ \ \ \ \ }\forall \text{ \ }\left[ 
\begin{array}{c}
v_{0N} \\ 
z_{0N}%
\end{array}%
\right] \in 
\mathbb{R}
^{2N}
\]%
which finishes the proof with the constant $D_1=C_{\rho}.$
\end{proof}
\begin{remark}
By means of a limiting process, it can be justified from Theorem \ref{thm:FEM} that there exists a null controller $u^*=lim_{n\rightarrow \infty}u_N^*$ to the elastic plate system that satisfies \eqref{controlboundFEM1}. Moreover, this control function will manifest the same asymptotics as that for the associated minimal energy function $\mathcal{E}_{min}(T).$ 
\end{remark}

\section{\textbf{Implementations of Numerical Schemes}}

This section is devoted to providing the algorithmic description of the finite element method (FEM) and finite difference method (FDM) schemes applied mainly on the system \eqref{2ndODE} or the finite dimensional systems \eqref{FEMVF} and \eqref{FDMNC}, respectively. We start with the FEM approximations.
\subsection{Implementation of the finite element method (FEM)}

 Approximating solutions to \eqref{2ndODE}, using the finite element method will require time discretization of the variational formulation \eqref{FEMVF}. For this, let $\Del t > 0$ be a given time step and assume that $u_N^{j+1}\in V_N$ represents an approximation of $u_N^*(t)$ at $t = t_{j+1} := (j+1)\Del t$. Then the fully-discrete scheme of \eqref{FEMVF} reads: for $j = 0, 1, 2, ...$, let $v_N^j, w_N^j, u_N^{j+1} \in V_N$ be given. Find $v_N^{j+1}, w_N^{j+1}$ such that

\begin{subequations}
\label{FEMVFTD}
\begin{alignat}{2}
\label{FEMVFTD1}
(v_N^{j+1}, \psi_N) &=\Del t (\nab w_N^{j+1}, \nab\psi_N) + (v_N^j, \psi_N),\quad \forall \psi_N\in V_N,\\
\label{FEMVFTD2}
(w_N^{j+1}, \varphi_N) &= (w_N^j, \varphi_N) - \Del t ((\nab v_N^{j+1}, \nab\varphi_N) + \rho(\nab w_N^{j+1}, \nab\varphi_N) - (u_N^{j+1}, \varphi_N)) ,\quad \forall \varphi_N\in V_N.
\end{alignat}
\end{subequations}
It is easy to show that \eqref{FEMVFTD} has a unique solution $v_N^{j+1}, w_N^{j+1}$ provided that $\Del t < \frac{1}{\rho}$, and this solution is the approximation to the solution of \eqref{2ndODE} at $t = t_{j+1}$. The crux of the computations is to compute the approximation to the null controller $u_N^{j+1}.$ With respect to the recipe given in \eqref{controlvec} to construct the approximate controllers, we remind the following notation:
In finite element method (FEM) approximation scheme, with respect to \eqref{FEMMainNCq}, the Kalman matrix $\mathcal{K}_N$ and its inverse $\mathcal{K}_N^{-1}$ are given by
\begin{equation}\label{FEMIKM}
\mathcal{K}_N =[\mathcal{B}_N,\mathcal{A}_{FE, N}]= \begin{bmatrix}
\bigzero_N & M_N^{-1} S_N  \\
I_N & -\rho M_N^{-1} S_N 
\end{bmatrix},\qquad
\mathcal{K}_N^{-1} = \begin{bmatrix}
\rho I_N & I_N  \\
S_N^{-1} M_N & \bigzero_N 
\end{bmatrix}
\end{equation}
where $\mathcal{B}_N = \begin{bmatrix}
\bigzero_N \\
I_N
\end{bmatrix},$
$\mathcal{A}_{FE, N}$ is the FEM approximation matrix (see \eqref{FEMSDG}) to the generator defined in \eqref{SDG}, and $M_N, S_N$ are the mass and stiffness matrices defined in \eqref{FEMM} and \eqref{FESM}, respectively. With the above notation now, referring to the formula \eqref{controlvec} for the construction of approximate controllers, we use the following:

Taking $\mathcal{A}_{N}=\mathcal{A}_{FE, N}$ as the FEM approximation matrix, the scalar valued function $f_T(t)$ as 
\begin{equation}\label{scalarf1}
f_T(t) = \frac{t^k(T-t)^k}{C_{T,k}}, \qquad C_{T,k} = \int_0^T \ {t^k(T-t)^k} \,dt,
\end{equation}
and 
\begin{equation}\label{controlvec1}
\mu_N(t) = \begin{bmatrix}
\mu_0(t) \\
\mu_1(t) \\
\mu_2(t) \\
\vdots      \\
\mu_k(t) 
\end{bmatrix}= -\mathcal{K}_N^{-1} e^{\mathcal{A}_{FE, N} t} \begin{bmatrix}
v_{0,N} \\
w_{0,N}
\end{bmatrix} f_T(t), \qquad 0\le t \le T,
\end{equation}
where each component $\mu_j$ is an $N-$vector, we have then the approximate controllers 
\begin{equation}
u_N^*(t) = \mu_0(t) + \mu_1^{'}(t) + \mu_2^{''}(t) + \cdots + \mu_k^{(k)}(t).
\end{equation}
We know that $e^{\mathcal{A}_{FE, N} t} \begin{bmatrix}
v_{0,N} \\
w_{0,N}
\end{bmatrix}$ represents the solution to the homogeneous variational formulation \eqref{FEMVF} (without the null controller term). That is, $$e^{\mathcal{A}_{FE, N} t} \begin{bmatrix}
v_{0,N} \\
w_{0,N}
\end{bmatrix} = \begin{bmatrix}
v_{N,h}(t) \\
w_{N,h}(t)
\end{bmatrix}$$ where $v_{N,h}(t), w_{N,h}(t) \in V_N$ satisfies (for all $t>0$):

\begin{subequations}
\label{FEMHVF}
\begin{alignat}{2}
\label{FEMHVF1}
(v_{N,h}^{'}(t), \psi_N) &= (\nab w_{N,h}(t), \nab\psi_N),\quad \forall \psi_N\in V_N,\\
\label{FEMHVF2}
(w_{N,h}^{'}(t), \varphi_N) &= -(\nab v_{N,h}(t), \nab\varphi_N) -\rho(\nab w_{N,h}(t), \nab\varphi_N),\quad \forall \varphi_N\in V_N,\\
\label{FEMHVF3}
[v_{N,h}(0), w_{N,h}(0)]  &= [v_{0,N}, w_{0,N}]\in V_N \times V_N.
\end{alignat}
\end{subequations}
To approximate $v_{N,h}(t), w_{N,h}(t)$ in \eqref{FEMHVF} at $t = t_{j+1}$ , we discretize \eqref{FEMHVF} in time with the same time stepping $\Del t$ used in \eqref{FEMVFTD} to get the following variational formulation:\\ For $j = 0, 1, 2, ...$, let $v_{N,h}^j, w_{N,h}^j \in V_N$ be given. Find $v_{N,h}^{j+1}, w_{N,h}^{j+1}$ such that

\begin{subequations}
\label{FEMHVFTD}
\begin{alignat}{2}
\label{FEMHVFTD1}
(v_{N,h}^{j+1}, \psi_N) &=\Del t (\nab w_{N,h}^{j+1}, \nab\psi_N) + (v_{N,h}^j, \psi_N),\quad \forall \psi_N\in V_N,\\
\label{FEMHVFTD2}
(w_{N,h}^{j+1}, \varphi_N) &= (w_{N,h}^j, \varphi_N) - \Del t ((\nab v_{N,h}^{j+1}, \nab\varphi_N) + \rho(\nab w_{N,h}^{j+1}, \nab\varphi_N)) ,\quad \forall \varphi_N\in V_N.
\end{alignat}
\end{subequations}
Then, by the above setting, we get

\begin{equation}\label{controlvecFEM}
\mu_N(t) = \begin{bmatrix}
\mu_0(t) \\
\mu_1(t) 
\end{bmatrix} = \begin{bmatrix}
-\rho I_N & -I_N  \\
-S_N^{-1} M_N & \bigzero_N 
\end{bmatrix} \begin{bmatrix}
v_{N,h}(t) \\
w_{N,h}(t)
\end{bmatrix} f_T(t) = \begin{bmatrix}
-(\rho v_{N,h}(t) + w_{N,h}(t))f_T(t) \\
-S_N^{-1} M_N v_{N,h}(t)f_T(t)
\end{bmatrix},
\end{equation}
where 
\begin{equation*}
f_T(t) = \frac{6t(T-t)}{T^3}, 
\end{equation*}
and $T$ is a given terminal time. Since $$u_N^*(t) =\mu_0(t) + \mu^{'}_1(t),$$ we turn our attention to approximate $\mu_0(t)$ and $\mu^{'}_1(t)$ at $t = t_{j+1},~~  j = 0, 1, 2, ...$. We approximate $\mu_0(t)$ at $t = t_{j+1}$ by

\begin{equation}\label{controlmu0FEM}
\mu_0(t_{j+1})\approx\mu_{0,N}^{j+1} := -(\rho v_{N,h}^{j+1} + w_{N,h}^{j+1})f_T(t_{j+1}).
\end{equation}
Since 
\begin{equation*}
\mu_1^{'}(t) = -S_N^{-1} M_N(v_{N,h}^{'}(t)f_T(t) + v_{N,h}(t)f_T^{'}(t)), 
\end{equation*}
then for a fixed $t>0$, $\mu_1^{'}(t)$ can be understood as the solution to the following variational formulation: Find $\mu_1^{'}(t)\in V_N$ such that
\begin{equation}
\label{FEMNCA}
(\nab \mu_1^{'}(t), \nab\psi_N) =- (G(t), \psi_N),\quad \forall \psi_N\in V_N,
\end{equation}
where 
\begin{equation*}
G(t) = v_{N,h}^{'}(t)f_T(t) + v_{N,h}(t)f_T^{'}(t).
\end{equation*}
 Because we are interested in approximating $\mu_1^{'}(t)$ at $t = t_{j+1},~~ j = 0, 1, 2, ...$, we approximate $G(t)$ at $t = t_{j+1}$ by
\begin{equation}
\label{FEMNCMU1}
G(t_{j+1}) \approx G_N^{j+1} := \frac{(v_{N,h}^{j+2} - v_{N,h}^{j+1})}{\Del t}f_T(t_{j+1}) + v_{N,h}^{j+1}f_T^{'}(t_{j+1}),
\end{equation}
hence, we approximate $\mu_1^{'}(t)$ at $t = t_{j+1}$ by $(\mu_{1,N}^{j+1})'$, where $(\mu_{1,N}^{j+1})'$ solves the following variational formulation : Find $(\mu_{1,N}^{j+1})'\in V_N$ such that 
\begin{equation}
\label{FEMNCAMU1}
(\nab (\mu_{1,N}^{j+1})', \nab\psi_N) =- (G_N^{j+1}, \psi_N),\quad \forall \psi_N\in V_N.
\end{equation}
Finally, we take $u_N^{j+1} := \mu_{0,N}^{j+1} + (\mu_{1,N}^{j+1})'$ to be the approximation of the null controller at $t = t_{j+1}$ and use it in \eqref{FEMVFTD}. 
Now, we provide an algorithm to summarize our implementation of the finite element method to solve \eqref{2ndODE}:

\begin{mathframed}
\smallskip\\
{\bf Algorithm 1 :}\label{Alg.1}
Let $T > 0$ (terminal time), $m \in \bbN$ ($m\ge 2$ is number of time stepping), and $\rho>2$ be user selected. Set $\Del t = \frac{T}{m}$, and $[v_N^{0}, w_N^{0}] = [v_{N,h}^{0}, w_{N,h}^{0}] =  [v_{0,N}, w_{0,N}]$. Then for $j = 0, 1, 2, ..., m-1$:
\begin{enumerate}
\item \textbf{Construction of $u_N^{j+1}$ :} Solve \eqref{FEMHVFTD} to find a solution $[v_{N,h}^{j+1}, w_{N,h}^{j+1}]$ and then use it again in \eqref{FEMHVFTD} to find $[v_{N,h}^{j+2}, w_{N,h}^{j+2}]$, that is: Find $[v_{N,h}^{j+2}, w_{N,h}^{j+2}] \in V_N \times V_N$ such that 
\begin{subequations}
\label{FEMHVFTDALG}
\begin{alignat}{2}
\label{FEMHVFTDALG1}
(v_{N,h}^{j+2}, \psi_N) &=\Del t (\nab w_{N,h}^{j+2}, \nab\psi_N) + (v_{N,h}^{j+1}, \psi_N),\quad \forall \psi_N\in V_N,\\
\label{FEMHVFTDALG2}
(w_{N,h}^{j+2}, \varphi_N) &= (w_{N,h}^{j+1}, \varphi_N) - \Del t ((\nab v_{N,h}^{j+2}, \nab\varphi_N) + \rho(\nab w_{N,h}^{j+2}, \nab\varphi_N)) ,\quad \forall \varphi_N\in V_N.
\end{alignat}
\end{subequations}
Set 
\begin{subequations}\label{MU0ADA}
\begin{alignat}{2}\label{MU0ADA1}
\mu_{0,N}^{j+1} &= -(\rho v_{N,h}^{j+1} + w_{N,h}^{j+1})f_T(t_{j+1}),\\
\label{MU0ADA2}
G_N^{j+1} &= \frac{(v_{N,h}^{j+2} - v_{N,h}^{j+1})}{\Del t}f_T(t_{j+1}) + v_{N,h}^{j+1}f_T^{'}(t_{j+1}).
\end{alignat}
\end{subequations}
Use $G_N^{j+1}$ (obtained in \eqref{MU0ADA2}) to find $(\mu_{1,N}^{j+1})'\in V_N$ by solving the variational formulation
\begin{equation}
\label{FEMNCAMU1}
(\nab (\mu_{1,N}^{j+1})', \nab\psi_N) =- (G_N^{j+1}, \psi_N),\quad \forall \psi_N\in V_N.
\end{equation}
Then set 
\begin{equation}\label{NCALG}
u_N^{j+1} = \mu_{0,N}^{j+1} + (\mu_{1,N}^{j+1})'.
\end{equation}

\item \textbf{Find $[v_N^{j+1}, w_N^{j+1}]$:} Use $u_N^{j+1}$ (obtained in \eqref{NCALG}) to find $[v_N^{j+1}, w_N^{j+1}]$ by solving the variational formulation \eqref{FEMVFTD}, that is: Find $[v_N^{j+1}, w_N^{j+1}]$ such that $\forall ~[\psi_N, \varphi_N] \in V_N \times V_N,$ 
\begin{subequations}
\label{FEMVFTDALG}
\begin{alignat}{2}
\label{FEMVFTDALG1}
(v_N^{j+1}, \psi_N) &=\Del t (\nab w_N^{j+1}, \nab\psi_N) + (v_N^j, \psi_N),\\
\label{FEMVFTDALG2}
(w_N^{j+1}, \varphi_N) &= (w_N^j, \varphi_N) - \Del t ((\nab v_N^{j+1}, \nab\varphi_N) + \rho(\nab w_N^{j+1}, \nab\varphi_N) - (u_N^{j+1}, \varphi_N)).
\end{alignat}
\end{subequations}
\end{enumerate}
\vspace{0.2cm}
\end{mathframed}
\subsection{Implementation of the Finite Difference Method (FDM)}

Similar to the FEM implementation, approximating solutions to \eqref{2ndODE} using the finite difference method will require time discretization of the finite difference scheme \eqref{FDMNC}.  Given a time step $\Del t > 0$ assume that $\bu_N^{j+1}\in \bbR^N$ is the vector whose components represent the approximation of $u_N^*(t)$ at $t = t_{j+1}$ and the grid points $(x_i, y_j)$ as labelled in \eqref{FDMNC}. Then the fully-discrete scheme of \eqref{FDMNC} reads: For $j = 0, 1, 2, ...$, let $\bv_N^j, \bw_N^j, \bu_N^{j+1} \in \bbR^N$ be given. Find $\bv_N^{j+1}, \bw_N^{j+1} \in \bbR^N$ such that
\begin{subequations}
\label{FDMMainNCTD}
\begin{alignat}{2}
\label{FEDMainNCTD1}
&\bv_N^{j+1} - \Del t D_N\bw_N^{j+1} = \bv_N^j,\\
\label{FDMMainNCTD2}
&\bw_N^{j+1} + \Del t D_N(\bv_N^{j+1} + \rho\bw_N^{j+1}) = \Del t \bu_N^{j+1} + \bw_N^j.
\end{alignat}
\end{subequations}
The solution $\bv_N^{j+1}, \bw_N^{j+1}$ to \eqref{FDMMainNCTD} are the vectors whose components represent the approximation to the solution of \eqref{2ndODE} at $t = t_{j+1}$ and the grid points $(x_i, y_j)$ as labelled in \eqref{FDMNC}.
Observe that \eqref{FDMMainNCTD} can be written as a linear $2 \times 2$ block system $\bf{A}\bf{x} = \bf{b}$, where 

\[
\bf{A} = \left[\begin{array}{cc}
I_N & -\Del t D_N \\
\Del t D_N & I_N+ \rho\Del t D_N
\end{array}\right]; \qquad
\bf{x} = \left[\begin{array}{c}
\bv_N^{j+1}  \\
\bw_N^{j+1}
\end{array}\right]; \qquad
\bf{b} = \left[\begin{array}{c}
\bv_N^j\\
\Del t \bu_N^{j+1} + \bw_N^j
\end{array}\right].
\]
The system \eqref{FDMMainNCTD} has a unique solution if the $2N \times 2N$ matrix $\bf{A}$ is invertible. Since the Schur complement of $\bf{A}$ will be the matrix $I_N + \rho\Del t D_N + (\Del t D_N)^2$ which is invertible, appealing to the theory of $2\times 2$ matrices we infer that the matrix $\bf{A}$ is invertible. For a detailed discussion, we refer the reader to \cite{blockmatrix}.

\noindent Similar to the finite element scheme, the crux of the computations is to compute the approximation to the null controller $\bu_N^{j+1}.$With respect to the recipe given in \eqref{controlvec} to construct the approximate controllers, we define the following matrices:\\
In finite difference method (FDM) approximation scheme, with respect to \eqref{FDMMainNC}, the Kalman matrix $\mathcal{K}_N$ and its inverse $\mathcal{K}_N^{-1}$ can be computed explicitly in terms of the matrix $D_N$ defined in \eqref{MBN}:
\begin{equation}\label{FDMIKM}
\mathcal{K}_N =[\mathcal{B}_N, \mathcal{B}_N\mathcal{A}_{FD, N}]= \begin{bmatrix}
\bigzero_N & D_N  \\
I_N & -\rho D_N 
\end{bmatrix},\qquad
\mathcal{K}_N^{-1} = \begin{bmatrix}
\rho I_N & I_N  \\
D_N^{-1} & \bigzero_N 
\end{bmatrix}.
\end{equation}
Here $\mathcal{B}_N = \begin{bmatrix}
\bigzero_N \\
I_N
\end{bmatrix},$
and $\mathcal{A}_{FD, N}$ is the FDM approximation matrix \eqref{FDMSDG} to the generator defined in \eqref{SDG2}. With the above notation now, referring to the formula \eqref{controlvec} for the construction of approximate controllers, we take $\mathcal{A}_{N}=\mathcal{A}_{FD, N}$ as the FDM approximation matrix, the scalar valued function $f_T(t)$ as 
\begin{equation}\label{scalarf2}
f_T(t) = \frac{t^k(T-t)^k}{C_{T,k}}, \qquad C_{T,k} = \int_0^T \ {t^k(T-t)^k} \,dt,
\end{equation}
and 
\begin{equation}\label{controlvec2}
\mu_N(t) = \begin{bmatrix}
\mu_0(t) \\
\mu_1(t) \\
\mu_2(t) \\
\vdots      \\
\mu_k(t) 
\end{bmatrix}= -\mathcal{K}_N^{-1} e^{\mathcal{A}_{FD, N} t} \begin{bmatrix}
v_{0,N} \\
w_{0,N}
\end{bmatrix} f_T(t), \qquad 0\le t \le T.
\end{equation}
Observe that $e^{\mathcal{A}_{N} t} Y_{N,0}$ in \eqref{controlvec} becomes $e^{\mathcal{A}_{FD, N} t} \begin{bmatrix}
\bv_{0,N} \\
\bw_{0,N}
\end{bmatrix}$ in the finite difference setting, and it represents the solution to the finite difference scheme \eqref{FDMNC} without the null controller term. That is, $$e^{\mathcal{A}_{FD, N} t} \begin{bmatrix}
\bv_{0,N} \\
\bw_{0,N}
\end{bmatrix} = \begin{bmatrix}
\bv_{N,h}(t) \\
\bw_{N,h}(t)
\end{bmatrix},$$ where $\bv_{N,h}(t), \bw_{N,h}(t) \in \bbR^N$ satisfies (for all $t>0$):
\begin{subequations}
\label{FDMHNC}
\begin{alignat}{2}
\label{FDMHNC1}
&\bv_{N,h}^{'}(t) = D_N\bw_{N,h}(t),\\
\label{FDMHNC2}
&\bw_{N,h}^{'}(t) = -D_N(\bv_{N,h}(t) + \rho\bw_{N,h}(t)),\\
\label{FDMHNC3}
&[\bv_N(0), \bw_N(0)]  = [\bv_{0,N},\bw_{0,N}]\in \bbR^{2N}.
\end{alignat}
\end{subequations}
To approximate $\bv_{N,h}(t),\bw_{N,h}(t)$ in \eqref{FDMHNC} at $t = t_{j+1}$ , we discretize \eqref{FDMHNC} in time using the same time stepping $\Del t$ we used in \eqref{FDMMainNCTD} to get the following finite difference scheme: For $j = 0, 1, 2, ...$, let $\bv_{N,h}^j, \bw_{N,h}^j \in \bbR^N$ be given. Find $\bv_{N,h}^{j+1}, \bw_{N,h}^{j+1}\in \bbR^N$ such that
\begin{subequations}
\label{FDMHVFTD}
\begin{alignat}{2}
\label{FDMHVFTD1}
&\bv_{N,h}^{j+1} - \Del t D_N\bw_{N,h}^{j+1} = \bv_N^j,\\
\label{FDMHVFTD2}
&\bw_{N,h}^{j+1} + \Del t D_N(\bv_{N,h}^{j+1} + \rho\bw_{N,h}^{j+1}) = \bw_{N,h}^j.
\end{alignat}
\end{subequations}
Observe that the null control formula in the finite difference setting becomes 
\begin{equation*}
\mu(t) = \begin{bmatrix}
\mu_0(t) \\
\mu_1(t) 
\end{bmatrix} = \begin{bmatrix}
-\rho I_N & -I_N  \\
-D_N^{-1} & \bigzero_N 
\end{bmatrix} \begin{bmatrix}
\bv_{N,h}(t) \\
\bw_{N,h}(t)
\end{bmatrix} f_T(t)\end{equation*}
\begin{equation}\label{controlvecFEM}
=\begin{bmatrix}
-(\rho \bv_{N,h}(t) + \bw_{N,h}(t))f_T(t) \\
-D_N^{-1}\bv_{N,h}(t)f_T(t)
\end{bmatrix},
\end{equation}
where 
\begin{equation*}
f_T(t) = \frac{6t(T-t)}{T^3}, 
\end{equation*}
and $T$ is a given terminal time. Since $$u_N^*(t) =\mu_0(t) + \mu^{'}_1(t),$$ we turn our attention to approximate $\mu_0(t)$ and $\mu^{'}_1(t)$ at $t = t_{j+1},~~ j = 0, 1, 2, ...$ We approximate $\mu_0(t)$ at $t = t_{j+1}$ by
\begin{equation}\label{controlmu0FEM}
\mu_0(t_{j+1})\approx\bm{\mu}_{0,N}^{j+1} := -(\rho \bv_{N,h}^{j+1} + \bw_{N,h}^{j+1})f_T(t_{j+1}).
\end{equation}
Since 
\begin{equation*}
\mu_1^{'}(t) = -D_N^{-1}(\bv_{N,h}^{'}(t)f_T(t) + \bv_{N,h}(t)f_T^{'}(t)), 
\end{equation*}
then for a fixed $t>0$,  $\mu_1^{'}(t)$ can be visualized as the solution to the following finite difference scheme:
\begin{equation}
\label{FDMNCA}
D_N\mu_1^{'}(t) = -G(t),
\end{equation}
where 
\begin{equation*}
G(t) = \bv_{N,h}^{'}(t)f_T(t) + \bv_{N,h}(t)f_T^{'}(t).
\end{equation*}
 Since we are interested in approximating $\mu_1^{'}(t)$ at $t = t_{j+1}, ~~j = 0, 1, 2, ...$, we approximate $G(t)$ at $t = t_{j+1}$ by
\begin{equation}
\label{FDMNCMU1}
G(t_{j+1}) \approx \bm{G}_N^{j+1} := \frac{(\bv_{N,h}^{j+2} - \bv_{N,h}^{j+1})}{\Del t}f_T(t_{j+1}) + \bv_{N,h}^{j+1}f_T^{'}(t_{j+1}),
\end{equation}
Using \eqref{FDMNCMU1} we now approximate $\mu_1^{'}(t)$ at $t = t_{j+1}$ by $(\bm{\mu}_{1,N}^{j+1})'$, where $(\bm{\mu}_{1,N}^{j+1})'$ solves the following finite difference scheme:
\begin{equation}
\label{FDMNCAMU1}
D_N (\bm{\mu}_{1,N}^{j+1})' = -\bm{G}_N^{j+1} ,
\end{equation}
Finally, we take $\bu_N^{j+1} := \bm{\mu}_{0,N}^{j+1} + (\bm{\mu}_{1,N}^{j+1})'$ to be the approximation of the null controller at $t = t_{j+1}$ and use it in \eqref{FDMMainNCTD}. We provide an algorithm to summarize our implementation of the finite difference method to solve \eqref{2ndODE}:

\begin{mathframed}
\smallskip\\
{\bf Algorithm 2 :}\label{Alg.2}
Let $T > 0$ (terminal time), $m \in \bbN$ ($m\ge 2$ is number of time stepping), and $\rho>2$ be user selected. Set $\Del t = \frac{T}{m}$, and $[\bv_N^{0}, \bw_N^{0}] = [\bv_{N,h}^{0}, \bw_{N,h}^{0}] =  [\bv_{0,N}, \bw_{0,N}]$. Then for $j = 0, 1, 2, ..., m-1$:
\begin{enumerate}
\item \textbf{Construction of $\bu_N^{j+1}$:} Solve \eqref{FDMHVFTD} to find the solution $[\bv_{N,h}^{j+1}, \bw_{N,h}^{j+1}]$ and then use this solution again in \eqref{FDMHVFTD} to find $[\bv_{N,h}^{j+2}, \bw_{N,h}^{j+2}]$. That is,\\ find $[\bv_{N,h}^{j+2}, \bw_{N,h}^{j+2}] \in \bbR^N \times \bbR^N$ such that 
\begin{subequations}
\label{FDMHVFTDALG}
\begin{alignat}{2}
\label{FDMHVFTDALG1}
&\bv_{N,h}^{j+2} - \Del t D_N\bw_{N,h}^{j+2} = \bv_{N,h}^{j+1},\\
\label{FDMHVFTDALG2}
&\bw_{N,h}^{j+2} + \Del t D_N(\bv_{N,h}^{j+2} + \rho\bw_{N,h}^{j+2}) = \bw_{N,h}^{j+1}.
\end{alignat}
\end{subequations}
Set 
\begin{subequations}\label{MU0ADAFD}
\begin{alignat}{2}\label{MU0ADAFD1}
\bm{\mu}_{0,N}^{j+1} &= -(\rho \bv_{N,h}^{j+1} + \bw_{N,h}^{j+1})f_T(t_{j+1}),\\
\label{MU0ADAFD2}
\bm{G}_N^{j+1} &= \frac{(\bv_{N,h}^{j+2} - \bv_{N,h}^{j+1})}{\Del t}f_T(t_{j+1}) + \bv_{N,h}^{j+1}f_T^{'}(t_{j+1}).
\end{alignat}
\end{subequations}
Use $\bm{G}_N^{j+1}$ (obtained in \eqref{MU0ADAFD2}) to find $(\bm{\mu}_{1,N}^{j+1})'\in \bbR^N$ by solving 
\begin{equation}
\label{FDMNCAMU1ALG}
D_N(\bm{\mu}_{1,N}^{j+1})' = -\bm{G}_N^{j+1}.
\end{equation}
Then set 
\begin{equation}\label{FDNCALG}
\bu_N^{j+1} := \bm{\mu}_{0,N}^{j+1} + (\bm{\mu}_{1,N}^{j+1})'.
\end{equation}

\item \textbf{Find $[\bv_N^{j+1}, \bw_N^{j+1}]$:} Use $\bu_N^{j+1}$ (obtained in \eqref{FDNCALG}) to find $[\bv_N^{j+1}, \bw_N^{j+1}]$ by solving the system \eqref{FDMMainNCTD}. That is, find $[\bv_N^{j+1}, \bw_N^{j+1}]$ such that
\begin{subequations}
\label{FDMVFTDALG}
\begin{alignat}{2}
\label{FDMVFTDALG1}
&\bv_N^{j+1} - \Del t D_N\bw_N^{j+1} = \bv_N^j,\\
\label{FDMVFTDALG2}
&\bw_N^{j+1} + \Del t D_N(\bv_N^{j+1} + \rho\bw_N^{j+1}) = \Del t \bu_N^{j+1} + \bw_N^j.
\end{alignat}
\end{subequations}
\end{enumerate}
\vspace{0.2cm}
\end{mathframed}
\section{\textbf{Numerical Experiments}}\label{sec-5}
\noindent In this section, we perform some numerical experiments
and compare the results with the theoretical ones
given in the previous sections. We consider an example where the data
is taken to be 
$\Omega = (0,\pi)^2$, $\rho = \frac{5}{2}$ and the initial condition to \eqref{2ndODE} is given as
\begin{align}
\label{eqn:TestProblem}
{\begin{pmatrix}
v_0(x,y)\\
w_0(x,y)
\end{pmatrix}=\begin{pmatrix}
0 \\
\frac{3}{2}\sin(2x)\sin(2y)
\end{pmatrix}}
\end{align}
We use the exact solution to the homogeneous part of the system \eqref{2ndODE} which is derived in Section 7.
\vspace{0.2cm}

\subsection{\textbf{Finite element scheme}}
\vspace{0.2cm}

By the use of Algorithm 1, $$(v_{N, h}(t), w_{N, h}(t)) \approx (v_N(t), w_N(t))~~~ \text{and}~~~ u_h^*(t) \approx u_N(t),$$ in tables \ref{tableOne}, \ref{tableTwo}, and \ref{tableThree}, denote the computed solution pair and the null controller for \eqref{2ndODE}, respectively. The mesh size is taken to be $h = \frac{1}{32}$ (or $N = 3338$) on a Delaunay triangulation using continuous functions on $\mct$ that are polynomials of degree one when restricted to any element $K\in\mct$. 

Tables \ref{tableOne} and \ref{tableTwo} show that $(v_{N, h}(T), w_{N, h}(T)) \to 0$ when $T$ is relatively big. Recall that the formula in \eqref{controlvec} is an approximation to the control function that will lead the solution $(v_N(t), w_N(t)) \to (0,0).$

Table \ref{tableThree} shows that the computed null control obeys the blowup rate in Theorem \ref{thm:FEM} as $T\to 0$. Also, the logarithmic graph in Figure 2 shows that the blowup rate for the computed null control $u_h^*(t) $ is similar to the graph of $y = x^{\frac{-3}{2}}$.
\begin{center}
\begin{table}
\caption{\label{tableOne}Errors and rates of convergence
for example \eqref{eqn:TestProblem} with time step $\Delta t = 0.2$ using Algorithm 1.}
 \begin{tabular}{||c c c c c||} 
 \hline
 $T$ & $\|v_{N, h}(T)\|^2 + \|w_{N, h}(T)\|^2$ & rate & $\|u_h^*\|_{L^2(L^2(\Omega);0,T)}$ & rate \\ [0.5ex] 
 \hline
$2^{1}$ &5.6144E-02	&--		&2.8778E-01 	&--\\
$2^{2}$ &1.5294E-02	&1.876         	&8.0441E-02  	&1.838\\
$2^{3}$ &3.9255E-03	&1.962	&2.1203E-02 	&1.923\\
$2^{4}$ &9.9397E-04	&1.981	&5.4391E-03 	&1.962\\
$2^{5}$ &2.5006E-04	&1.991	&1.3771E-03 	&1.981\\
$2^{6}$ &6.2713E-05	&1.995	&3.4646E-04 	&1.991\\
 \hline
\end{tabular}
\end{table}
\end{center}
\begin{center}
\begin{table}
\caption{\label{tableTwo}Errors and rates of convergence
for example \eqref{eqn:TestProblem} with time step $\Delta t = 0.1$ using Algorithm 1.}
 \begin{tabular}{||c c c c c||} 
 \hline
 $T$ & $\|v_{N, h}(T)\|^2 + \|w_{N, h}(T)\|^2$ & rate & $\|u_h^*\|_{L^2(L^2(\Omega);0,T)}$ & rate \\ [0.5ex] 
 \hline
$2^{1}$ &4.2633E-02	&--		&3.0454E-01 	&--\\
$2^{2}$ &1.1209E-02	&1.927         	&8.4262E-02  	&1.854\\
$2^{3}$ &2.8385E-03	&1.981	&2.2117E-02 	&1.929\\
$2^{4}$ &7.1412E-04	&1.991	&5.6627E-03 	&1.966\\
$2^{5}$ &1.7909E-04	&1.995	&1.4324E-03 	&1.983\\
$2^{6}$ &4.4843E-05	&1.998	&3.6021E-04 	&1.992\\
 \hline
\end{tabular}
\end{table}
\end{center}

\begin{center}
\begin{table}
\caption{\label{tableThree}Errors and rates of convergence
for example \eqref{eqn:TestProblem} with time step $\Delta t = \frac{1}{1536}$ using Alg. 1.}
 \begin{tabular}{||c c c c c||} 
 \hline
 $T$ & $\|v_{N, h}(T)\|^2 + \|w_{N, h}(T)\|^2$ & rate & $\|u_h^*\|_{L^2(L^2(\Omega);0,T)}$ & rate \\ [0.5ex] 
 \hline
$2^{-4}$ &1.0363E 00	&--		&2.0955E+01 	&--\\
$2^{-5}$ &1.3295E 00	&-0.35         	&3.5071E+01  	&-0.74\\
$2^{-6}$ &1.5819E 00	&-0.25	&5.7895E+01	&-0.72\\
$2^{-7}$ &2.0669E 00	&-0.38	&1.0233E+02 	&-0.82\\
$2^{-8}$ &3.7593E 00	&-0.86	&2.1864E+02 	&-1.09\\
$2^{-9}$ &1.1112E+01	&-1.56	&6.2465E+02 	&-1.51\\
 \hline
\end{tabular}
\end{table}
\end{center}

\begin{center}
\begin{figure}
\begin{tikzpicture}
\begin{loglogaxis}[
	xlabel={$T$},
	ylabel={},
	legend pos=outer north east
]
\addplot coordinates {
	 (1/512,1.1112E+01 )   (1/256,  3.7593)
	(1/128, 2.0669)  (1/64,  1.5819)  (1/32,  1.3295)
	(1/16, 1.0363 ) 
};

\addplot coordinates{
	 (1/512,6.2465E+02 )   (1/256,  2.1864E+02)
	(1/128, 1.0233E+02)  (1/64,  5.7895E+01)  (1/32,  3.5071E+01)
	(1/16, 2.0955E+01) 
};
\addplot [
    domain=1/512:1/16, 
    samples=100, 
    color=green,
]
{x^-3/2};
\legend{$\|v_{N, h}(T)\|^2 + \|w_{N, h}(T)\|^2$,$\|u_h^*\|_{L^2(L^2(\Omega);0,T)}$, $y = x^{-3/2}$}
\end{loglogaxis}
\end{tikzpicture}
\caption{Logarithmic plots of $\|v_{N, h}(T)\|^2 + \|w_{N, h}(T)\|^2$ vs. $\|u_h^*\|_{L^2(L^2(\Omega);0,T)}$ vs.$y = x^{-3/2}$  using Alg. 1.}
\end{figure}
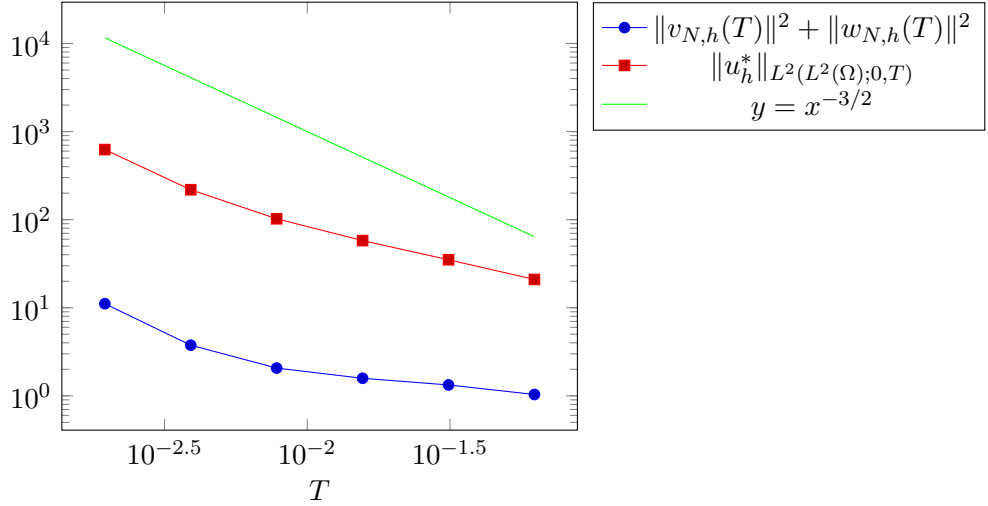\label{FEMGraph}
\end{center}
\vspace{-1.9cm}

\subsection{\textbf{Finite difference scheme}}

By using Algorithm 2, $$(v_{N, h}(t), w_{N, h}(t)) \approx (v_N(t), w_N(t))~~ \text{and}~~u_h^*(t) \approx u_N(t)$$ denote the computed solution pair and the null controller for \eqref{2ndODE}, respectively in tables \ref{tableFour}, \ref{tableFive}, and \ref{tableSix}. The grid size is taken to be $n = 32$. 

Tables \ref{tableFour} and \ref{tableFive} show that $(v_{N, h}(T), w_{N, h}(T)) \to 0$ as $T$ gets large. Recall that the formula in \eqref{controlvec} is an approximation to the control that will lead the solution $(v_N(t), w_N(t)) \to (0,0).$ 
 Table \ref{tableSix} shows that the computed null control fluctuates around the blowup rate in Theorem \ref{thm:FDM} as $T\to 0$. Also, the logarithmic graph in Figure 3 shows that the blowup rate for the computed null control $u_h^*(t) $ is similar to the graph of $y = x^{\frac{-3}{2}}$.
\begin{center}
\begin{table}
\caption{\label{tableFour}Errors and rates of convergence
for example \eqref{eqn:TestProblem} with time step $\Delta t = 0.2$ using Alg. 2.}
 \begin{tabular}{||c c c c c||} 
 \hline
 $T$ & $\|v_{N, h}\|^2_{\bbR^{N}} + \|w_{N, h}\|^2_{\bbR^{N}}$ & rate & $\|u_h^*\|_{(\bbR^2N; 0,T)}$ & rate \\ [0.5ex] 
 \hline
$2^{1}$ &4.7354E-06	&--		&2.1344E 00 	&--\\
$2^{2}$ &0.0000E 00	&--         	&5.8009E-01  	&1.879\\
$2^{3}$ &0.0000E 00	&--         	&1.5152E-01 	&1.937\\
$2^{4}$ &0.0000E 00	&--	           &3.8729E-02 	&1.968\\
$2^{5}$ &0.0000E 00	&--	           &9.7906E-03 	&1.984\\
$2^{6}$ &0.0000E 00	&--	           &2.4613E-03	&1.992\\
 \hline
\end{tabular}
\end{table}
\end{center}
\begin{center}
\begin{table}
\caption{\label{tableFive}Errors and rates of convergence
for example \eqref{eqn:TestProblem} with time step $\Delta t = 0.1$ using Alg. 2.}
 \begin{tabular}{||c c c c c||} 
 \hline
 $T$ & $\|v_{N, h}\|^2_{\bbR^{N}} + \|w_{N, h}\|^2_{\bbR^{N}}$ & rate & $\|u_h^*\|_{L^2(L^2(\Omega);0,T)}$ & rate \\ [0.5ex] 
 \hline
$2^{1}$ &3.9361E-06	&--		&3.6379E 00 	&--\\
$2^{2}$ &0.0000E 00	&--         	&9.5927E-01  	&1.923\\
$2^{3}$ &0.0000E 00	&--         	&2.4645E-01 	&1.961\\
$2^{4}$ &0.0000E 00	&--	           &6.2465E-02 	&1.980\\
$2^{5}$ &0.0000E 00	&--	           &1.5724E-02 	&1.990\\
$2^{6}$ &0.0000E 00	&--	           &3.9446E-03	&1.995\\
 \hline
\end{tabular}
\end{table}
\end{center}

\begin{center}
\begin{table}
\caption{\label{tableSix}Errors and rates of convergence
for example \eqref{eqn:TestProblem} with time step $\Delta t = \frac{1}{1536}$ using Alg. 2.}
 \begin{tabular}{||c c c c c||} 
 \hline
 $T$ & $\|v_{N, h}\|^2_{\bbR^{N}} + \|w_{N, h}\|^2_{\bbR^{N}}$ & rate & $\|u_h^*\|_{L^2(L^2(\Omega);0,T)}$ & rate \\ [0.5ex] 
 \hline
$2^{-4}$ &3.5527E+05	&--		           &8.9903E+05 	&--\\
$2^{-5}$ &1.5122E+06	&-2.090         	&2.8531E+06  	&-1.666\\
$2^{-6}$ &2.6687E+06	&-0.819         	&7.6917E+06	&-1.431\\
$2^{-7}$ &2.9605E+06	&-0.150         	&1.9956E+07 	&-1.375\\
$2^{-8}$ &4.2870E+06	&-0.534         	&5.5401E+07  	&-1.473\\
$2^{-9}$ &1.2655E+07	&-1.561		&1.8020E+08 	&-1.701\\
 \hline
\end{tabular}
\end{table}
\end{center}
\begin{center}
\begin{figure}
\begin{tikzpicture}
\begin{loglogaxis}[
	xlabel={$T$},
	ylabel={},
	legend pos=outer north east
]
\addplot coordinates {
	 (1/512,1.8020E+08  )   (1/256,  5.5401E+07 )
	(1/128, 1.9956E+07)  (1/64,  7.6917E+06)  (1/32,  2.8531E+06)
	(1/16, 8.9903E+05 ) 
};

\addplot coordinates{
	 (1/512,1.2655E+07 )   (1/256,  4.2870E+06)
	(1/128, 2.9605E+06)  (1/64,  2.6687E+06)  (1/32,  1.5122E+06)
	(1/16, 3.5527E+05) 
};
\addplot [
    domain=1/512:1/16, 
    samples=100, 
    color=green,
]
{x^-3/2};
\legend{$\|v_{N, h}\|^2_{\bbR^{N}} + \|w_{N, h}\|^2_{\bbR^{N}}$,$\|u_h^*\|_{L^2(L^2(\Omega);0,T)}$, $y = x^{-3/2}$}
\end{loglogaxis}
\end{tikzpicture}
\caption{Logarithmic plots of $\|v_{N, h}\|^2_{\bbR^{N}} + \|w_{N, h}\|^2_{\bbR^{N}}$ vs. $\|u_h^*\|_{L^2(L^2(\Omega);0,T)}$ vs.$y = x^{-3/2}$ using Alg. 2.}
\end{figure}\label{FDMGraph}
\end{center}

\newpage
\section{\textbf{Conclusion}}

 The approximation of the null controller using both numerical schemes obey the same blow up rate of $\mathcal{O}(T^{-3/2})$. We also see that while the finite difference scheme (FD) gives better results approximating the solution at terminal time $T$, the finite element scheme (FE) is more stable computing the solution across different values of $T$.

\newpage
\section{\textbf{A Numerical Test Problem}}

In this section, we will derive an exact solution to problem \eqref{2ndODE} without the controller term $u$, that is

\begin{equation}\label{Homo}
\frac{d}{dt}\begin{bmatrix}
v \\
w
\end{bmatrix}=\mathcal{\overline{A}}\begin{bmatrix}
v \\
w
\end{bmatrix},\quad 
\begin{bmatrix}
v(0) \\
w(0)
\end{bmatrix} = \begin{bmatrix}
v_0 \\
w_0
\end{bmatrix}=\begin{bmatrix}
A\omega_0 \\
\omega_1
\end{bmatrix} \in L^2(\Omega) \times L^2(\Omega).
\end{equation}
where \begin{equation} \label{SDG1}
\mathcal{\overline{A}} = \begin{bmatrix}
0 & A  \\
-A & -\rho A 
\end{bmatrix}
\end{equation}
and the operator $A$ is the Laplacian defined in \eqref{DL}. The unique solution to \eqref{Homo} is given by 
\begin{equation} \label{ASOLREP}
\begin{bmatrix}
v(t) \\
w(t)
\end{bmatrix}=e^{\mathcal{\overline{A}}t}\begin{bmatrix}
v_0 \\
w_0
\end{bmatrix}
\end{equation}
In order to derive an explicit solution to \eqref{ASOLREP}, we need to compute the exponential matrix $e^{\mathcal{\overline{A}}t}$. Let $\{ \lambda_i, \phi_i  \}_{i = 1}^\infty$ be the eigenvalues and orthonormal eigenvectors for the operator $\mathcal{\overline{A}}$ defined in \eqref{SDG1}. Then 
\begin{equation*}
y(t) =\begin{bmatrix}
v(t) \\
w(t)
\end{bmatrix}\end{equation*}
must solve $y^{'}(t) = \mathcal{\overline{A}}y(t)$. Since $$y(t) =\begin{bmatrix}
\sum_i \alpha_i(t) \phi_i \\
\sum_i \beta_i(t) \phi_i
\end{bmatrix}$$ for some functions $\alpha_i,~\beta_i$ we then have 
\begin{equation}
\label{A1SOLREP}
\frac{d}{dt}\begin{bmatrix}
\sum_i \alpha_i(t) \phi_i \\
\sum_i \beta_i(t) \phi_i
\end{bmatrix} =\begin{bmatrix}
0 & A \\
-A & -\rho A
\end{bmatrix}\begin{bmatrix}
\sum_i \alpha_i(t) \phi_i \\
\sum_i \beta_i(t) \phi_i
\end{bmatrix}.
\end{equation}
By orthonormality, $\forall i = 1, 2, 3, ...$, 
\begin{equation}
\label{A2SOLREP}
\frac{d}{dt}\begin{bmatrix}
\sum_i \alpha_i(t) \phi_i \\
\sum_i \beta_i(t) \phi_i
\end{bmatrix} =M_i \begin{bmatrix}
\alpha_i(t) \phi_i \\
\beta_i(t) \phi_i
\end{bmatrix},
\end{equation}
where
\begin{equation}
\label{A3SOLREP}
M_i = \begin{bmatrix}
0 & \lambda_i \\
-\lambda_i & -\rho \lambda_i
\end{bmatrix} \qquad i=1, 2, 3, ...
\end{equation}
The eigenpairs for $M_i$ are 
\begin{equation}
\label{A4SOLREP}
\{ \eta_{i,1}, \begin{bmatrix}
-\frac{\rho}{2} + \frac{1}{2}\sqrt{\rho^2 - 4} \\
1
\end{bmatrix}  \} \cup \{ \eta_{i,2}, \begin{bmatrix}
-\frac{\rho}{2} - \frac{1}{2}\sqrt{\rho^2 - 4} \\
1
\end{bmatrix}  \}
\end{equation}
where 
\begin{align*}
\eta_{i,1} &= -\frac{\lambda_i}{2}\big( \rho + \sqrt{\rho^2 - 4}  \big),\\
\eta_{i,2} &= -\frac{\lambda_i}{2}\big( \rho - \sqrt{\rho^2 - 4}  \big).
\end{align*}
Denoting the similarity matrix 
\begin{equation}
\label{A5SOLREP}
S = \begin{bmatrix}
-\frac{\rho}{2} + \frac{1}{2}\sqrt{\rho^2 - 4} & -\frac{\rho}{2} - \frac{1}{2}\sqrt{\rho^2 - 4} \\
1 & 1
\end{bmatrix}
\end{equation}
using the change of variables $Sz = y,$ and the diagonalization argument gives us $$Sz^{'} = y^{'} = M_i Sz.$$ 
or $$z^{'} = S^{-1} M_i Sz = \Lambda z$$ where
\begin{equation*}
\Lambda = \begin{bmatrix}
\eta_{i, 1} & 0 \\
0 & \eta_{i, 2}
\end{bmatrix}, \qquad z = \begin{bmatrix}
c_{i,1}e^{\eta_{i, 1}t} \\
c_{i,2}e^{\eta_{i, 2}t}
\end{bmatrix}
\end{equation*}
Here $$\begin{bmatrix}
c_{i,1} \\
c_{i,2}
\end{bmatrix} = z(0) = S^{-1} y(0)$$ are constants. Observe that the constants $c_{i,1}, c_{i,2}$ can be found explicitly for $i = 1, 2, ...$ as
\begin{equation}
\label{A6SOLREP}
\begin{bmatrix}
c_{i,1}\\
c_{i,2}
\end{bmatrix} = S^{-1} \begin{bmatrix}
\alpha_i(0) \\
\beta_i(0)
\end{bmatrix} = \frac{1}{\sqrt{\rho^2 - 4}}\begin{bmatrix}
\alpha_i(0) + \frac{\beta_i(0)}{2}\big( \rho + \sqrt{\rho^2 - 4}  \big)\\
-\alpha_i(0) - \frac{\beta_i(0)}{2}\big( \rho - \sqrt{\rho^2 - 4}  \big)
\end{bmatrix}.
\end{equation}
Subsequently, we have an explicit formula for $\begin{bmatrix}
\alpha_i(t) \\
\beta_i(t)
\end{bmatrix}$ as
\begin{equation}
\label{A7SOLREP}
\begin{bmatrix}
\alpha_i(t) \\
\beta_i(t)
\end{bmatrix} = S \begin{bmatrix}
c_{i,1}e^{\eta_{i, 1}t} \\
c_{i,2}e^{\eta_{i, 2}t}
\end{bmatrix},
\end{equation}
From \eqref{A7SOLREP}, the solution $y(t)$ in \eqref{ASOLREP} can be written explicitly as
\begin{equation}
\label{A8SOLREP}
y(t) =\begin{bmatrix}
v(t) \\
w(t)
\end{bmatrix} = e^{\mathcal{\overline{A}}t}\begin{bmatrix}
v_0\\
w_0
\end{bmatrix} = \sum_{i=1}^{\infty} \begin{bmatrix}
\alpha_i(t)\phi_i\\
\beta_i(t)\phi_i
\end{bmatrix}.
\end{equation}
Now, let $\Omega = (0,\pi)^2$ and consider the problem

\begin{equation}
\label{A2ndODE}
\frac{d}{dt}\begin{bmatrix}
v(t) \\
w(t)
\end{bmatrix}=\mathcal{\overline{A}}\begin{bmatrix}
v(t) \\
w(t)
\end{bmatrix},\quad 
\begin{bmatrix}
v(.,0) \\
w(.,0)
\end{bmatrix} = \begin{bmatrix}
0 \\
\sin(2x)\sin(2y)
\end{bmatrix}.
\end{equation}
Recall that the Dirichlet Laplacian eigenpairs in $\Omega$ are 
\begin{equation*}
\{ \lambda_{mn} = m^2 + n^2,\; \phi_{mn} = \frac{2}{\pi}\sin(mx)\sin(ny)\}_{m,n=1}^{\infty}.
\end{equation*}
The initial data will be associated with 
\begin{equation*}
\lambda_{22} = 2^2 + 2^2 = 8,\; \phi_{22} = \frac{2}{\pi}\sin(2x)\sin(2y),
\end{equation*}
and subsequently we have for $i, j = 1, 2, ...$
\begin{equation*}
\alpha_{i, j}(0) = 0,\; \beta_{i, j}(0) = \left\{
\begin{array}{ll}
      \frac{\pi}{2}, & i = 0 = j  \\
      0, & otherwise\\
\end{array} 
\right.
\end{equation*}
Hence, the use of \eqref{A6SOLREP} and \eqref{A7SOLREP} would give us the functions $\begin{bmatrix}
\alpha_{i, j}(t) \\
\beta_{i, j}(t)
\end{bmatrix}.$ 

\noindent Now, we are in position to explicitly write the exact solution for the problem \eqref{A2ndODE} which will be used in our numerical experiments:
\begin{equation}
\label{A9SOLREP}
\begin{bmatrix}
v(t) \\
w(t)
\end{bmatrix} = \begin{bmatrix}
\Big(e^{-4t(\rho - \sqrt{\rho^2 - 4})} -    e^{-4t(\rho + \sqrt{\rho^2 - 4})}  \Big)\sin(2x)\sin(2y)\\
\Big(\Big( \frac{\sqrt{\rho^2 - 4}}{2} +  \frac{\rho}{2} \Big) e^{-4t(\rho + \sqrt{\rho^2 - 4})} +   \Big( \frac{\sqrt{\rho^2 - 4}}{2} -  \frac{\rho}{2} \Big)  e^{-4t(\rho - \sqrt{\rho^2 - 4})}\Big)\sin(2x)\sin(2y)
\end{bmatrix}.
\end{equation}
If we take $\rho = \frac{5}{2}$, then the expression \eqref{A9SOLREP} simplifies to 
\begin{equation}
\label{A10SOLREP}
\begin{bmatrix}
v(t) \\
w(t)
\end{bmatrix} = \begin{bmatrix}
\Big(e^{-4t} -    e^{-16t} \Big)\sin(2x)\sin(2y)\\
\Big(2e^{-16t} -    \frac{1}{2}e^{-4t} \Big)\sin(2x)\sin(2y)
\end{bmatrix}.
\end{equation}


\end{document}